\providecommand{\U}[1]{\protect\rule{.1in}{.1in}}
\newtheorem{theorem}{Theorem}
\newtheorem{corollary}[theorem]{Corollary}
\newtheorem{definition}[theorem]{Definition}
\newtheorem{lemma}[theorem]{Lemma}
\newtheorem{proposition}[theorem]{Proposition}
\newtheorem{remark}[theorem]{Remark}
\newenvironment{proof}[1][Proof]{\noindent\textbf{#1.} }{\ \rule{0.5em}{0.5em}}
\begin{document}

\title{$Local^{3}$ Index Theorem.}
\author{\large Nicolae Teleman\\
Dipartimento di Scienze Matematiche, \\
Universita' Politecnica delle Marche, 60131-Ancona, Italia\\
 e-mail:   teleman@dipmat.univpm.it}

\maketitle

\tableofcontents

%%%%%%%%%%%%%%%%%%%%%%%%%%%%%%%%%%%%%%%%%%%%%%%%%%%%%%%%%%%%%%%%%%%

\section{Abstract}   %%%%%%% Sect.1
\par
$Local^{3}$ Index Theorem means $Local(Local(Local \;Index \; Theorem)))$.  
\par
\noindent$Local \; Index \; Theorem$ is the
Connes-Moscovici local index theorem \cite{Connes-Moscovici1}, \cite{Connes-Moscovici2}.  The second ''Local'' refers to the  cyclic homology localised to a certain 
separable subring of the ground algebra, while the last one refers to Alexander-Spanier type cyclic homology.
\par  
The Connes-Moscovici work is based on the operator $R(A) = \mathbf{P} - \mathbf{e}$ associated to the elliptic pseudo-differential operator $A$ on the smooth manifold $M$, where $\mathbf{P}$ ,  $\mathbf{e}$ are idempotents, see \cite{Connes-Moscovici1}, Pg. 353.
\par
The operator $R(A)$ has two main merits: it is a smoothing operator and its distributional kernel is situated in an arbitrarily small neighbourhood of the diagonal in $M \times M$.  \par
The operator $R(A)$ has also two setbacks: -i) it is not an idempotent (and therefore it does not have a genuine Connes-Chern character); -ii) even if it were an idempotent, its Connes-Chern character would belong to the cyclic homology of the algebra of smoothing operators (with \emph{arbitrary} supports, which is \emph{trivial}. 
\par
This paper presents a new solution to the difficulties raised by the two setbacks. 
\par
For which concerns -i), we show that although $R(A)$ is not an idempotent, it satisfies the identity
 $
 (\mathbf{R}(A))^{2} \;=\; \mathbf{R}(A) - [\mathbf{R}(A) . e + e .\mathbf{R}(A) ].
 $
 We show that the operator $R(A)$ has a genuine Chern character provided the cyclic homology complex of the algebra of smoothing operators is \emph{localised} to the separable sub-algebra $\Lambda = \mathbb{C} + \mathbb{C} . e$, see Sect. 8.1.
\par
For which concerns -ii), we introduce the notion of \emph{local} cyclic homology; this is constructed on the foot-steps of the Alexander-Spanier homology, i.e. by filtering the chains of the cyclic homology complex of the algebra of smoothing operators by their distributional support, see Sect. 7.
\par
Using these new instruments, we give a reformulation of the Connes-Moscovici local Index Theorem, see Theorem 23, Sect. 9. As a corollary of this theorem, we show that the \emph{local} cyclic homology of the algebra of smoothing operators is at least as big as the Alexander-Spanier homology of the base manifold.
\par
The present reformulation of Connes-Moscovici local index theorem  opens the way to new investigations, see Sect. 10.    

 \section{Indroduction}       %%%%%%%%%  Sect. 2
Using the language of non-commutative geometry, Connes and Moscovici \cite{Connes-Moscovici1}, \cite{Connes-Moscovici2}  build an algebraic bridge connecting in a natural way the analytical index and the topological
index of elliptic pseudo-differential operators on smooth manifolds. Their construction extends also to topological manifolds with quasi-conformal structure, see Connes-Sullivan-Teleman \cite{Connes-Sullivan-Teleman}.  
 \par
 Given an elliptic pseudo-differential operator $A$ on the smooth manifold $M$, Connes and Moscovici associate
 its index class $Ind (A) \in H^{AS}_{\ast}(M)$, where $H^{AS}_{\ast}(M)$ denotes Alexander-Spanier homology.
 The index class is obtained as the result of the composition of two constructions
\begin{equation}
K^{1}(C(S^{\ast}M)) \stackrel{\mathbf{R}}{\longrightarrow}  K^{0} ({\mathcal{K}}_{M}) \stackrel{\tau}{\longrightarrow}  H_{ev}^{AS} (M),
\end{equation}
where $K^{1}(C(S^{\ast}M)) $  contains the symbol of the elliptic operator $A$;
 $K^{0} ({\mathcal{K}}_{M}) $ consists of differences of stably homotopy classes of smooth idempotents with \emph{arbitrary} supports.
 \par
 The first homomorphism $R$ applied upon the operator $A$ is given by an algebraic construction involving the operator $A$ and one of its parametrices $B$; as a result one obtains an operator $\mathbf{R}(A)$ who has the following basic properties:
 \par
 -1) it has small support about the diagonal in $M \times M$
 \par
 -2) it is a smoothing operator on $M$
 \par
 -3) $\mathbf{R}(A) = \mathbf{P} - \mathbf{e}$, where $\mathbf{P}$ and  $\mathbf{e}$ are idempotents, with small support about the diagonal. The idempotent $\mathbf{e}$ is a constant operator; it does not 
 contain homological information.
 \par
The smoothing operator $\mathbf{R}(A)$ is obtained by implementing the connecting homomorphism 
$\delta_{1}: K^{1}(C(S^{\ast}M)) \longrightarrow  K^{0} ({\mathcal{K}}_{M})$ associated to the short exact sequence of Banach algebras
\[
0 \rightarrow {\mathcal{K}}_{M} \rightarrow   {\mathcal{L}}_{M}   \rightarrow  C(S^{\ast}M) 
\rightarrow  0.
\]
The operator $\mathbf{R}(A)$ has two inconveniences:
\par
-i) it is not an idempotent
\par
-ii) even if $\mathbf{R}(A)$ were an idempotent, its Chern character, belonging to the cyclic
homology of the algebra of smoothing operators with \emph{arbitrary} supports, would be trivial because this cyclic homology is trivial.
\par
Connes-Moscovici \cite{Connes-Moscovici1}, Pg. 352 state clearly  
that the connecting homomorphism $\partial_{1}$ takes values in $ K^{0} ({\mathcal{K}}_{M}) \simeq \mathbb{Z}$ and that the information carried by it is solely the \emph{index} of the operator. However, \cite{Connes-Moscovici1} states also that by pairing the residue operator $\mathbf{R}(A)$ with the Alexander-Spanier cohomology (which is \emph{local}) one recovers the whole co-homological information carried by its symbol, (see the Connes-Moscovici local Index Theorem 1 of Sect. 4). 
\par
The Connes-Moscovici index class $Ind (A) := \tau (\; \mathbf{R}(A)  \;)$ is a  well defined Alexander-Spanier \emph{homology class} on $M$. Its correctness depends upon two important ingredients:
\par
-a) the realisation of the Alexander-Spanier co-homology by means of skew-symmetric co-chains;
this allows one to get rid of the idempotent $\mathbf{e}$ from the expression of  $\mathbf{R}(A)$ and ultimately to treat  $\mathbf{R}(A)$ as it were an idempotent, see -3); call it virtual idempotent.
\par
-b) at this point, $Ind (A)$ uses the formal pairing of the Chern character of the virtual idempotent -a) 
with the cyclic homology of the algebra $C^{\infty}(M)$.  This operation requires to produce trace class
operators.
 \par
 In this paper we address  the same problem, i.e. to define an algebraic bridge between the analytical and topological index of elliptic operators. However, we propose here a different way to overcome
 the difficulties described above. This will be done by introducing two main ideas. 
 \par
 The first idea of the paper is based on the remark that the residue operator $\mathbf{R}(A)$  satisfies the identity
 \[
 (\mathbf{R}(A))^{2} \;=\; \mathbf{R}(A) - [\mathbf{R}(A) . e + e .\mathbf{R}(A) ].
 \]
 We show in Sect. 8.1. that, based on this identity, the operator $\mathbf{R}(A)$  has a genuine Chern character provided the ordinary cyclic homology complex of the algebra of smoothing operators
 is localised with respect to the  separable ring $\Lambda := \mathbb{C} + \mathbb{C} e$. This replacement does not modify the cyclic homology.
 \par
 The second idea of the paper consists of replacing the ordinary cyclic homology of the algebra of smoothing operators by the \emph{local} cyclic homology of the algebra.  This is done by filtering the cyclic complex of the algebra based on the supports of the chains. The \emph{local} cyclic homology
 of the algebra of smoothing operators is then defined in the same way as the Alexander-Spanier co-homology is defined.
 \par
The combination of these two ideas allows one to reformulate the Connes-Moscovici local index theorem, see Theorem 23.
\par
Notice that our considerations do not require necessarily to deal with trace class operators; this opens the way to new applications.
\par
As a corollary of Theorem 23 we obtain that the \emph{local} cyclic homology
of the algebra of smoothing operators is at least as big as the Alexander-Spanier homology of the base space, see Proposition 25.
\par
We stress also that our \emph{local} cyclic homology of the Banach algebra of smoothing operators is independent of  Connes' notions of \emph{entire} or \emph{asymptotic} cyclic homology, see \cite{Connes1}, \cite{Connes2} and differs
from  Puschnigg's \cite{Puschnigg} construction. 
\par
Our methods lead to interesting questions and new scenarios; these will be addressed elsewhere, see Sect. 10.
\par
The author thanks Jean-Paul Brasselet, Andr$\acute{e}$ Legrand and Alexandr Mischenko for useful conversations. 
 \par  
%%%%%%%%%%%%%%%%%%%%%%%%%%%%%%%%%%%%%%%%%%%%%%%%%%%%%%%%%%%
\section{Recall of $K$-theory groups of Banach algebras.}   %%%%%%%%  Sect. 3 (OK)
\par
For the benefit of the reader we recall here the basic definitions regarding the $K$-theory of Banach algebras.
\par
To begin with, as motivation, suppose $X$ is a compact connected topological space and $C(X)$ denote the $C^{\ast}$ algebra of continuous complex valued functions on $X$. A continuous complex vector bundle over $X$ may be described either as a finite projective module over $C(X)$, or as an idempotent of the matrix algebra  $\mathcal{M}_{n}(C(X))$. Passing to isomorphism classes of bundles allows one to identify bundles over $X$ with continuous homotopy classes of idempotents of the algebra $\mathcal{M}_{n}(C(X))$, with $n$ sufficiently large. Denote by $\textit{Vect}(X)$ the set of such homotopic classes of idempotents.
\par
The direct sum of finite projective $C(X)$ modules passes to  $\textit{Vect}(X)$ so that it becomes a commutative semigroup. Taking the Grothendieck completion of this semigroup, one obtains the $K-$theory group $K^{0}(X)$. Any element of the group $K^{0}(X)$ may be represented as $ \xi = [p] - [q]$, where $[p]$ and $[q]$  are the homotopy classes of two idempotents $p, q \in \mathcal{M}_{m}(C(X))$.
The idempotent $q$ may be chosen to be a unit matrix. The element $\xi$ is called a virtual vector bundle of rank $m-n$.
\par
The subgroup of $K^{0}(X)$ consisting of virtual bundles of rank zero is denoted by $\tilde{K}^{0}(X).$
\par
An idempotent $p_{n} \in \mathcal{M}_{n}(C(X)) $ may be used to produce the idempotent $p_{n+1} \in \mathcal{M}_{n+1}(C(X)) $ by  stabilization
\begin{equation}
p_{n+1} :=
\left( \begin{array}{cc}
p_{n}&0\\
0&1
\end{array}
\right)
\end{equation}
\par
Then any element of $\tilde{K}^{0}(X)$ may be thought of as the stably homotopy class of an idempotent in $\mathcal{M}_{m}(C(X))$.
\par
As for any general homology functor, one defines $K^{1}(X) = \tilde{K}^{0} (\Sigma X)$, where $\Sigma$ denotes suspension.
Any vector bundle over $\Sigma X$ may be described by a clutching function $f: X \rightarrow GL(m, \textsl{C})$.
Alternatively, any such function may be thought of as an element of the subset $GL_{m}(C(X)) \subset \mathcal{M}_{m}(C(X))$
consisting precisely of all invertible elements of the algebra $M_{m}(C(X))$.
\par
As the stabilization formula above may be used not only for idempotents but also for invertibles, one gets an equivalent definition of $K^{1}(X)$
\begin{equation}
K^{1}(X) = stably \hspace{1mm} homotopy \hspace{1mm} classes \hspace{1mm} of \hspace{1mm} invertibles \hspace{1mm} in \hspace{1mm}
\mathcal{M}_{m}(C(X)),
\end{equation}
or
\begin{equation}
K^{1}(X) = \pi_{0}\hspace{1mm}( \hspace{1mm}Lim_{m \rightarrow \infty} GL_{m}((C(X)) \hspace{1mm}).
\end{equation}
\par
The higher order $K$ theory groups are defined by
\begin{equation}
K^{i} (X) = \tilde{K} ({\Sigma}^{i} X),  \hspace{3mm} 1 \leq i.
\end{equation}
\par
For any closed subspace $ Y \subset X$, one define the relative $K$ groups
\begin{equation}
K^{i} ( X, Y) := \tilde{K}^{i} (X/Y),
\end{equation}
where $X/Y$ denotes the quotient space.
\par
The Bott periodicity theorem implies the periodicity of the $K$-theory groups, which leads to the
6-term exact sequence

%%%%%%%%%%%%%%%%%%%%%%%%%%%%%%%%%%%%%%%%%%%%%%%%%%%%%%%%%%%%%%%%%%%%%  6-Term exact sequence -begin
\begin{equation}
 \begin{array}{ccccc}
 K^{0}(X/Y)& \rightarrow & K^{0}(X)& \rightarrow  &  K^{0}(Y) \\
 \uparrow  &             &         &              & \downarrow \\
 K^{1}(Y)  & \leftarrow  & K^{1}(X)&  \leftarrow  &  K^{1}(X/Y)
\end{array}.
\end{equation}

%%%%%%%%%%%%%%%%%%%%%%%%%%%%%%%%%%%%%%%%%%%%%%%%%%%%%%%%%%%%%%%%%%%%%  6-Term exact sequence -end
\par
If we replace in the above constructions the algebra $C(X)$ by an arbitrary unital Banach algebra $\cal{A}$,
the $K$ theory groups of the algebra $\cal{A}$ are defined (with $[\;]$ meaning homotopy class)
\begin{equation}
K^{0}({\cal A}) := \{\;[p] - [q] \;| \; p^{2}=p \in  \mathcal{M}_{m}({\cal A}), \hspace{2mm} q^{2}=q \in
\mathcal{M}_{n}({\cal A}), \;\; m, n \in \textit{N} \;\}
\end{equation}
\begin{equation}
K^{1}({\cal A}) := \pi_{0}\hspace{1mm}( \hspace{1mm}Lim_{m \rightarrow \infty} GL_{m}({\cal A}) \hspace{1mm}).
\end{equation}
\par
To the $(X,\;Y)$ pair of compact non-empty topological spaces there corresponds the exact sequence of algebras of continuous
functions
\begin{equation}
0  \rightarrow   C(X,\;Y) \rightarrow  C(X)   \rightarrow C(Y)   \rightarrow  0,
\end{equation}
where
\begin{equation}
C(X,\;Y) \;=\; \{f \;\in\; C(X) \; | \;  f(Y) = 0  \; \}.
\end{equation}
More generaly, one may consider an arbitrary exact sequence of Banach algebras
\begin{equation}
0  \rightarrow   J \rightarrow  A   \rightarrow B \rightarrow  0.    %%%%%% Formula (11)
\end{equation}
Let $\tilde{J} := J \oplus \textit{C}.1$ be the algebra $J$
with the unit $ 1 \in \textit{C}$ adjoined and let $\epsilon: \tilde{J} \rightarrow \textit{C}$ be the augmentation mapping.
By definition, for $i = 0, 1$
\begin{equation}
K^{i} (J) = Ker \;{\epsilon}_{i}
\end{equation}
where ${\epsilon}_{i}:  K^{i} (\tilde{J})  \rightarrow   K^{i} (\textit{C}).$
\par
By construction, $K^{i}(C(X)) = K^{i}(X),$   $i = 0, 1.$
\par
 The analogue of Bott periodicity holds for $K$-theory groups, see Wood \cite{Wood}; hence,
for any short exact sequence of Banach algebras and continuous mappings as above, the
6-terms $K$ exact sequence of $K$-theory groups holds
%%%%%%%%%%%%%%%%%%%%%%%%%%%%%%%%%%%%%%%%%%%%%%%%%%%%%%%%%%%%%%%%%%%%%  6 - Term exact sequence K-homology -begin

\begin{equation}
 \begin{array}{ccccc}
 K^{1}(J)& \rightarrow & K^{1}(A)& \rightarrow  &  K^{1}(B) \\
 \uparrow  &             &         &              & \downarrow \\
 K^{0}(B)  & \leftarrow  & K^{0}(A)&  \leftarrow  &  K^{0}(J)
\end{array}.
\end{equation}

%%%%%%%%%%%%%%     6 - Term exact sequence K-homology -end

%%%  %%%%  Section 4
\section{Connes-Moscovici Local Index Theorem.}    %%%%%%   Sect. 4 
\par
In this section we summarize the Connes-Moscovici \cite{Connes-Moscovici1} construction of the
{\em local index class} for an elliptic operator. All constructions and notations 
in this section are those of \cite{Connes-Moscovici1}.
\par
To fix the notation, let $A: L_{2}(E) \rightarrow L_{2}(F) $ be an elliptic pseudo-differential operator of order zero
from the vector bundle $E$ to the vector bundle $F$  on the compact smooth manifold $M$.
 Let ${\sigma}_{pr} (A) = a$ be its principal symbol, seen as a continuous isomorphism from the bundle $\pi ^{\ast} E$ to the bundle $\pi ^{\ast} F$ over the unit co-sphere bundle $S(T^{\ast}M)$  ( $\pi$ is the co-tangent bundle projection).
Let $B$ be a pseudo-differential parametrix for the operator $A$. The parametrix $B$, having principal symbol 
${\sigma}_{pr} (B) = a^{-1},$ may be chosen so that the operators $S_{0}= 1 - BA$  and $S_{1}= 1 - AB$ be {\emph smoothing} operators. Additionally, supposing that the distributional support of the operator $A$ is sufficiently small about the diagonal,
the operators $B$, $S_{0},$ $S_{1}$ may be supposed to have also small supports about the diagonal.
\par
With the operators $A, B, S_{0}, S_{1}$ one manufactures the invertible operator
\begin{equation}
\mathbf{L} =
\left( \begin{array}{cc}
S_{0}&-(1 + S_{0})B\\
A&S_{1}
\end{array}
\right)
: L_{2}(E) \oplus L_{2}(F) \rightarrow L_{2}(E) \oplus L_{2}(F)
\end{equation}
with inverse
\begin{equation}
\mathbf{L}^{-1} =
\left( \begin{array}{cc}
S_{0}&(1 + S_{0})B\\
-A&S_{1}
\end{array}
\right)
\end{equation}
The operator $\mathbf{L}$ is used to produce the idempotent $\mathbf{P}$
\begin{equation}
\mathbf{P} =
\mathbf{L} \left(
\begin{array}{cc}
1&0\\
0&0
\end{array}
\right) \mathbf{L}^{-1}.
\end{equation}
Let $\mathbf{P}_{1}$, resp.  $\mathbf{P}_{2}$ be the projection onto the direct summand
$L_{2}(E)$, resp.  $L_{2}(F)$.

A direct computation shows that 
\begin{equation}
\mathbf{R} := \mathbf{P} - \mathbf{P}_{2}=
\left( \begin{array}{cc}
S_{0}^{2}& S_{0}(1 + S_{0})B\\
S_{1}A&- S_{1}^{2}
\end{array}
\right).
\end{equation}
\noindent
This shows that the \emph{residue} operator $\mathbf{R}$ is a smoothing operator on
$L_{2}(E) \oplus L_{2}(F)$ with small support about the diagonal.
\par
The motivation for the consideration of the operators $L, P, R$ comes
from the implementation of the connecting homomorphism
\begin{equation}
\partial_{1}: K^{1}(C(S^{\ast}M)) \rightarrow K^{0}({\mathcal{K}}_{M})\cong  \texttt{Z}
\end{equation}
in the 6-terms K-theory  groups exact sequence associated to the short exact sequence
of $C^{\ast}$ algebras
\begin{equation}
0 \rightarrow {\mathcal{K}}_{M} \rightarrow   {\mathcal{L}}_{M}   \rightarrow  C(S^{\ast}M) 
\rightarrow  0;    %%%%%% formula (19)
\end{equation}
here ${\mathcal{K}}_{M}$ is the algebra of compact operators on $L_{2}(M),$  ${\mathcal{L}}_{M}$ is the
norm closure in the algebra of bounded operators of the algebra of pseudo-differential
operators of order zero and  $C(S^{\ast}M)$ is the algebra of continuous functions
on the unit co-sphere bundle to $M$.
\par
In fact, if $\mathbf{a}$ denotes the symbol of the elliptic operator $A$, then after embedding the
bundles $E$ and $F$ into the trivial bundle of rank $N$, the operators  $L, \mathbf{P}, \mathbf{P}_{2}$, resp. $R$,  may be seen as elements of the
matrix algebras $\mathcal{M}_{N}({\mathcal{L}}_{M})$, resp. $\mathcal{M}_{N}({\mathcal{K}}_{M})$, and hence
\begin{equation}
\partial_{1} ([\mathbf{a}]) = [\mathbf{P}] - [\mathbf{P}_{2}] \in K^{0} ({\mathcal{K}}_{M}).
\end{equation}
\par
Let $C^{q}(M)$ denote the space of Alexander-Spanier cochains of degree $q$ on $M$ consisting
of all smooth, \textit{anti-symmetric} real valued functions $\phi$ defined on $M^{q+1}$,
which have support on a sufficiently small tubular neighbourhood of the diagonal.
\par
Then, for any $[\mathbf{a}] \in K^{1}(C(S^{\ast}M))$,
and for any even number $q$ one considers the linear functional
\begin{equation}
\tau_{\mathbf{a}}^{q}: C^{q}(M) \longrightarrow   \mathbb{C}
\end{equation}
given by the formula
\begin{equation}
\tau_{\mathbf{a}}^{q} (\phi) = \int_{M^{q+1}} \mathbf{R} (x_{0}, x_{1}) \mathbf{R} (x_{1}, x_{2}) ...
\mathbf{R} (x_{q}, x_{0}) \phi (x_{0}, x_{1}, ... , x_{q}),
\end{equation}
where $\mathbf{R} (x_{0}, x_{1})$ is the kernel of the smoothing operator $\mathbf{R}$ defined above.
%%%%%%%%%%%%%%
\par
Using the above construction, Connes and Moscovici \cite{Connes-Moscovici1}
produce the \emph{index class homomorphism}
\begin{equation}
Ind: K^{1}(C(S^{\ast}M))  \otimes_{\mathbb{C}}   H^{ev}_{AS}(M) \longrightarrow \mathbb{C},
\end{equation}
where $H^{ev}_{AS}(M)$ denotes Alexander-Spanier cohomology. On the Alexander-Spanier co-chains $\phi$ 
it is defined by
\begin{equation}
Ind (\mathbf{a} \otimes_{\mathbb{C}}    \phi   ) :=  \tau_{\mathbf{a}}^{q} (\phi) 
\end{equation}

%%%%%%%%%%%%%%%
\par
The functional $\tau_{\mathbf{a}}^{q}$ is an Alexander-Spanier cycle of degree $q$
over $M$;  it defines a homology class $[\tau_{\mathbf{a}}^{q}] \in H_{q}(M, R)$.  

\begin{theorem} Connes-Moscovici \cite{Connes-Moscovici1} Theorem 3.9.
Let $A$ be an elliptic pseudo-differential operator on $M$ and let $ [\phi] \in H^{2q}_{comp}(M)$. Then
\begin{equation}
Ind_{[\phi]} A \;=\; \frac{1}{(2\pi i)^{q}} \frac{q!}{(2q)!}(-1)^{dim M} <\;Ch \sigma (A) \tau (M)  [\phi], \;[T^{\ast}M] \;>
\end{equation} 
where $\tau (M) = Todd (TM) \otimes C$ and $ H^{\ast} (T^{\ast}M)$ is seen as a module over $H^{\ast}_{comp}(M)$.
\end{theorem}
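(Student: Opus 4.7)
The plan is to factor both sides of the identity through a common intermediate object, namely a pairing between a cyclic cocycle attached to $\mathbf{R}$ and the de Rham cohomology of $M$, and then to reduce the equality to a purely local symbolic computation on a neighbourhood of the diagonal. First I would check that $\tau^{q}_{\mathbf{a}}$, as defined in (23), is indeed a cyclic (and in fact Alexander--Spanier) cocycle of degree $q$: the smoothing character of $\mathbf{R}$ guarantees that the integrand is a continuous function on $M^{q+1}$, the smallness of the supports of $\mathbf{R}$ and of $\phi$ localises everything to a tubular neighbourhood of the diagonal, and the antisymmetry of $\phi$ combined with the cyclic structure of the kernel product yields $b\tau^{q}_{\mathbf{a}}=B\tau^{q}_{\mathbf{a}}=0$ modulo Alexander--Spanier coboundaries. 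This step justifies that $Ind_{[\phi]}A$ only depends on the class $[\phi]\in H^{2q}_{AS}(M)\cong H^{2q}(M,\mathbb{R})$ and on the K-theory class $[\mathbf{a}]$.

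Next I would transfer the computation to de Rham cohomology. Using the standard quasi-isomorphism between smooth Alexander--Spanier cochains concentrated near the diagonal and differential forms (taking a representative with $\phi(x_{0},\dots,x_{q})$ equal, to leading order, to $\frac{1}{q!}\widetilde{\phi}(x_{0})(dx_{1}-dx_{0})\wedge\cdots\wedge (dx_{q}-dx_{0})$ for a closed form $\widetilde{\phi}$), the multilinear integral in (23) is rewritten as a single integral over $M$ of the pointwise ``trace'' of the product of kernels of $\mathbf{R}$ expanded along the diagonal. This is where the prefactor $\frac{1}{(2\pi i)^{q}}\frac{q!}{(2q)!}(-1)^{\dim M}$ will begin to appear, essentially as the combinatorial/orientation constant relating the antisymmetric AS cochain to its de Rham representative and accounting for the dimension of the co-sphere bundle.

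The core of the argument is then the local symbolic computation. Using $\mathbf{R}=\mathbf{P}-\mathbf{P}_{2}$ and the explicit matrix form of $\mathbf{P}$ in terms of $A$, $B$, $S_{0}$, $S_{1}$, I would expand each factor $\mathbf{R}(x_{i},x_{i+1})$ in normal coordinates around the diagonal via its oscillatory integral representation $\int e^{i\langle x_{i}-x_{i+1},\xi_{i}\rangle}\,r(x_{i},\xi_{i})\,d\xi_{i}$, where $r$ is built from $a=\sigma_{pr}(A)$ and $a^{-1}$. Performing the iterated fibre integrals over $M^{q+1}$ reduces, after a standard stationary-phase / Getzler-type rescaling, to an integration over $T^{\ast}M$ of a differential form representing $Ch(\mathbf{P})-Ch(\mathbf{P}_{2})$; the Mayer--Vietoris / suspension identification of $K^{1}(C(S^{\ast}M))$ with stable homotopy classes of symbols then rewrites this as $Ch\,\sigma(A)$ paired with $[\phi]$. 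The Todd factor $\tau(M)=Todd(TM)\otimes\mathbb{C}$ arises from the Jacobian of the exponential map (equivalently, from the $\widehat{A}$-type correction in the asymptotic expansion) that is needed to pass from integration over a neighbourhood of the diagonal in $M^{q+1}$ to integration over $T^{\ast}M$, exactly as in Atiyah--Singer.

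The main obstacle I anticipate is the last step: producing the Todd class. The combinatorial constant and the Chern-character-of-symbol part come out of relatively direct manipulations with the explicit form of $\mathbf{R}$ and the AS-de Rham comparison, but the appearance of $Todd(TM)$ is genuinely analytic, and rigorously identifying it requires either a Getzler-rescaling argument on the asymptotic expansion of the iterated kernel product or a reduction, via homotopy invariance of both sides in $[\mathbf{a}]$, to a set of generators for $K^{1}(C(S^{\ast}M))$ on which the formula can be checked by hand (e.g.\ Bott generators on $\mathbb{R}^{2n}$, where $Todd$ is trivial, combined with multiplicativity under products). Either approach is delicate, and this is where most of the analytic work of \cite{Connes-Moscovici1} is concentrated.
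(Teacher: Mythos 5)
First, a point about the ground rules of the comparison: the paper you are being measured against contains \emph{no proof} of this statement. Theorem 1 is quoted verbatim from Connes--Moscovici \cite{Connes-Moscovici1} (their Theorem 3.9), with all constructions and notation imported from that reference, and it is used later purely as an input: Sect.~9 reformulates it as Theorem 23 and derives Proposition 24 from it, but nowhere does the present paper reprove it. So your proposal cannot agree or disagree with ``the paper's proof''; it has to be judged against the original argument of \cite{Connes-Moscovici1}. Against that benchmark, your first two steps are sound and match the original: the fact that $\tau^{q}_{\mathbf{a}}$ pairs well with Alexander--Spanier cohomology and that the pairing depends only on $[\phi]$ is exactly Lemma 2.1 of \cite{Connes-Moscovici1} (reproduced in this paper as Lemma 16 and Corollary 17), and the smooth-AS/de Rham comparison, including the combinatorial normalization, is standard.

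The genuine gap is in your central analytic step. A Getzler-type rescaling needs a parameter to rescale against --- a small time in a heat semigroup or a large semiclassical parameter --- and here there is none: $\mathbf{R}$ is manufactured from an order-zero pseudodifferential operator $A$ and a parametrix $B$, and the only ``smallness'' in the problem is the smallness of the distributional \emph{support} near the diagonal, which does not generate an asymptotic expansion of the iterated kernel integral. A stationary-phase reduction of $\int e^{i\langle x_{i}-x_{i+1},\xi_{i}\rangle} r(x_{i},\xi_{i})\,d\xi_{i}$ with no large parameter is not a valid operation, so the step ``performing the iterated fibre integrals reduces, after rescaling, to an integration over $T^{\ast}M$'' would fail as written; in particular the Todd class cannot be extracted this way. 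This is precisely why Connes and Moscovici do not argue by direct kernel expansion: having established that the pairing is a homotopy-invariant, stable functional of $[\mathbf{a}]\in K^{1}(C(S^{\ast}M))$ and $[\phi]$, they obtain the characteristic-class evaluation by reduction to known index computations (ultimately the classical Atiyah--Singer theorem), importing the Todd class rather than rederiving it analytically. Your fallback route in the last paragraph --- homotopy invariance of both sides in $[\mathbf{a}]$, verification on generators, multiplicativity --- is the viable one and is essentially the shape of the original proof; but it is offered as an aside, whereas it needs to \emph{be} the proof, with the homotopy invariance of $Ind_{[\phi]}$ (independence of the choice of $B$, stability under smooth deformation of symbols, which changes $\mathbf{R}$ by degenerate terms) established carefully first.
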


%%%%%%%%%%%%%%%%%%%%%%%%%   Sect. 5
\section{$K$-Theory \emph{Local} Symbol Index Class.}   %%%%%%    Sect. 5
\par
The content of this section presents an interest by itself although it is not going to be used in this paper.
\par
In this section we are going to show that by replacing in the above constructions the Hilbert spaces by corresponding bundles and operators by their symbols, one gets a quasi-local \emph{residue bundle} $\mathbf{r}$ on the total space of the co-tangent bundle $\pi: T^{\ast}(M) \longrightarrow M$. The Connes-Moscovici \emph{residue} operator $\mathbf{R}$ appears to be the quantification of the bundle $\mathbf{r}$. 
\par
All considerations here are made on the total space of the co-tangent bundle. 
\par
 Let $\xi = {\pi}^{\ast}(E)$ and $\eta = {\pi}^{\ast}(F)$ the pullbacks of the bundles $E$  and  $F$. Let $\lambda: T(M) \longrightarrow [0,1]$ be a smooth function which is identically zero on a small neighbourhood $U$ of the zero section and identically 1 on the complement of $2U$. 
\par
Let $A: L_{2}(E) \rightarrow L_{2}(F) $ be the elliptic pseudo-differential operator considered above
 and let $B$ be the pseudo-differential parametrix for the operator $A$. 
The principal symbols  of the operators $A$ and $B$ 
\begin{equation}{\sigma}_{pr} (A) = \mathbf{a}: \xi \longrightarrow \eta 
\end{equation} 
\begin{equation}
\mathbf{b} = {\sigma}_{pr} (B) = \mathbf{a}^{-1}: \eta \longrightarrow \xi
\end{equation}
are isomorphisms away from the zero section. The symbol $\mathbf{a}$ defines the $K^{0}$-theory triple 
$(\; \xi, \;\eta, \; \mathbf{a})$ with compact support on $T^{\ast}(M)$.
\par
We regularize the bundle homomorphisms $\mathbf{a}$, and $\mathbf{a}^{-1}$ by multiplying them by the function $\lambda$; let $\tilde{\mathbf{a}}$  and  $\tilde{\mathbf{b}}$ be the obtained bundle homomorphisms.
\par
 Let $s_{0}= 1 - \tilde{\mathbf{b}} \; \tilde{\mathbf{a}}$  and $s_{1}= 1 - \tilde{\mathbf{a}} \; \tilde{\mathbf{b}}$. They are bundle homomorphisms with supports in the neighbourhood $2U$ of the zero section.
\par
With the bundle homomorphisms $\tilde{\mathbf{a}}, \tilde{\mathbf{b}}, s_{0}, s_{1}$ one manufactures the smooth bundle isomorphism 
\begin{equation}
\textit{l} =
\left( \begin{array}{cc}
s_{0}&-(1 + s_{0})\tilde{\mathbf{b}}\\
\tilde{\mathbf{a}}&s_{1}
\end{array}
\right)
: \xi \oplus \eta \rightarrow \xi \oplus \eta
\end{equation}
with inverse

\begin{equation}
\textit{l}^{-1} =
\left( \begin{array}{cc}
s_{0}&(1 + s_{0})\tilde{\mathbf{b}}\\
-\tilde{\mathbf{a}}&s_{1}
\end{array}
\right)
\end{equation}

Let $\mathbf{p}_{1}$, resp.  $\mathbf{p}_{2}$,  be the direct sum projection of the
bundle  $\xi \oplus \eta$ onto the first, resp. the second, summand.

The isomorphism  $\textit{l}$ is used to produce the idempotent $\mathbf{p}$
\begin{equation}
\mathbf{p} =
\textit{l} \left(
\begin{array}{cc}
1&0\\
0&0
\end{array}
\right) \textit{l}^{-1} \; = \; \textit{l} \;  \mathbf{p}_{1}  \;   \textit{l}^{-1}.
\end{equation}
Therefore, $\tilde{\xi} \; := \; Image (\mathbf{p})$ is a smooth sub-bundle of the bundle $\xi \oplus \eta$.
\begin{proposition}
\par
-i) $\textit{l}: \; \xi \longrightarrow \tilde{\xi}$  is a bundle isomorphism
\par
-ii) away from the neighbourhood $2U$, the bundles $\tilde{\xi}$,  $\eta$ coincide  
\par
-iii) the triples $(\; \tilde{\xi}, \; \eta, \; \mathbf{I}_{\eta} \; )$, \;
$(\; \xi, \;\eta, \; a)$ are isomorphic and hence they define the same element of $K^{0}(T^{\ast}(M)).$
\end{proposition}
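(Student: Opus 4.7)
The plan is to observe that all three claims follow from two facts: the projection $\mathbf{p}$ is conjugate to the constant projection $\mathbf{p}_1$ by the bundle isomorphism $\textit{l}$, and outside the neighbourhood $2U$ the regularization becomes trivial so that $\textit{l}$ and $\mathbf{p}$ reduce to explicit matrices in $a$ and $a^{-1}$. With these two facts in hand, parts (i), (ii), (iii) are essentially formal.

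For part (i), I would argue that since $\mathbf{p}=\textit{l}\,\mathbf{p}_{1}\,\textit{l}^{-1}$, the isomorphism $\textit{l}:\xi\oplus\eta\to\xi\oplus\eta$ restricts to a bijection from $\mathrm{Image}(\mathbf{p}_{1})$ onto $\mathrm{Image}(\mathbf{p})=\tilde{\xi}$. Under the identification $\mathrm{Image}(\mathbf{p}_{1})=\xi\oplus 0\cong\xi$, this is precisely the asserted bundle isomorphism $\textit{l}|_{\xi}:\xi\to\tilde{\xi}$. For part (ii), I would just plug $\lambda\equiv 1$ into the definitions: on the complement of $2U$ one has $\tilde{\mathbf{a}}=\mathbf{a}$, $\tilde{\mathbf{b}}=\mathbf{a}^{-1}$, and consequently $s_{0}=s_{1}=0$. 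A two-line matrix computation then gives
\[
\mathbf{p} \;=\; \begin{pmatrix} 0 & -\mathbf{a}^{-1} \\ \mathbf{a} & 0 \end{pmatrix}\begin{pmatrix} 1 & 0 \\ 0 & 0 \end{pmatrix}\begin{pmatrix} 0 & \mathbf{a}^{-1} \\ -\mathbf{a} & 0 \end{pmatrix} \;=\; \begin{pmatrix} 0 & 0 \\ 0 & 1 \end{pmatrix} \;=\; \mathbf{p}_{2},
\]
so $\tilde{\xi}=\mathrm{Image}(\mathbf{p}_{2})=\eta$ outside $2U$.

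For part (iii), I would view $K^{0}(T^{\ast}M)$ as equivalence classes of triples $(E,F,\phi)$ of bundles with $\phi$ a bundle isomorphism outside a compact set. The pair of bundle isomorphisms $\textit{l}|_{\xi}:\xi\to\tilde{\xi}$ (from (i)) and $\mathbf{I}_{\eta}:\eta\to\eta$ constitutes the desired equivalence, provided their compositions with the two triple maps agree outside a compact set. This last check is again explicit: for $v\in\xi$ outside $2U$, using $s_{0}=0$ and $\tilde{\mathbf{a}}=\mathbf{a}$ one has $\textit{l}|_{\xi}(v)=(0,\mathbf{a}v)$, which under the identification $\tilde{\xi}=\eta$ from (ii) equals $\mathbf{a}v$. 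Thus the diagram commutes outside $2U$, and the two triples represent the same class in $K^{0}(T^{\ast}M)$.

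I do not expect a serious obstacle here; the content is genuinely formal once (i) and (ii) are set up, and the only mild subtlety is being precise about the identification $\tilde{\xi}=\eta$ outside $2U$ so that the symbol $\mathbf{I}_{\eta}$ in the triple $(\tilde{\xi},\eta,\mathbf{I}_{\eta})$ acquires a well-defined meaning. The role of the proposition is conceptual, recasting the Connes--Moscovici K-theory symbol class as the image of a \emph{local} residue bundle, so the point is to make the bookkeeping transparent rather than to overcome an analytic difficulty.
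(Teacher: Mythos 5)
Your proposal is correct and follows essentially the same route as the paper: part (i) via the conjugation $\mathbf{p}=\textit{l}\,\mathbf{p}_{1}\,\textit{l}^{-1}$ carrying $\mathrm{Image}(\mathbf{p}_{1})$ to $\tilde{\xi}$, part (ii) via the explicit matrix computation with $s_{0}=s_{1}=0$ outside $2U$ giving $\tilde{\xi}=\mathrm{Image}(\mathbf{p}_{2})=\eta$ there, and part (iii) via the commuting square built from $\textit{l}|_{\xi}$ and the identity, which is exactly the paper's check that $\textit{l}_{\infty}\mathbf{p}_{1}=\left(\begin{smallmatrix}0&0\\ a&0\end{smallmatrix}\right)$. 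Your added care about the meaning of the identification $\tilde{\xi}=\eta$ outside $2U$ is a welcome precision but does not change the argument.
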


{\bf Proof.}
-i)  Obviously,
\begin{equation}
\textit{l}^{-1}(\; \tilde{\xi}  \;) = \textit{l}^{-1}(\; Image( \;
\textit{l} \;  \mathbf{p}_{1}  \;   \textit{l}^{-1}
    \;)  \;) 
    = \textit{l}^{-1}(\; Image( \;
\textit{l} \;  \mathbf{p}_{1}    
    \;)  \;) 
    = \; Image( \; \mathbf{p}_{1}    
    \;) = \xi .
\end{equation}
\par
-ii)
Let the subscript $_{\infty}$ denote the behaviour on the complement of $2U$. Then

%%%%%%%%%%%%%%%%%%%%%%%%%%%%%%%%%%%%%%%%%%
\begin{equation}
\textit{l}_{\infty} =
\left( \begin{array}{cc}
0&-\mathbf{a}^{-1}\\
a&0
\end{array}
\right), \hspace{1cm}  
\textit{l}_{\infty}^{-1} =
\left( \begin{array}{cc}
0&\mathbf{a}^{-1}\\
-\mathbf{a}&0
\end{array}
\right)
\end{equation}

%%%%%%%%%%%%%%%%%%%%%%%%%%%%%%%%%%%%%%

and therefore

\begin{equation}
\tilde{\xi}_{\infty} = Image ( \;
\textit{l}_{\infty} \;  \mathbf{p}_{1}  \;   \textit{l}_{\infty}^{-1}
    \;)   
\end{equation}

\begin{equation}
= Image \left  
 ( \begin{array}{cc}
0&-\mathbf{a}^{-1}\\
\mathbf{a}&0
\end{array}
\right) \;
\left( \begin{array}{cc}
1&0\\
0&0
\end{array}
\right) \;
\left( \begin{array}{cc}
0&\mathbf{a}^{-1}\\
-\mathbf{a}&0
\end{array}
\right)
\end{equation}

\begin{equation}
= Image \; \left( \begin{array}{cc}
0&0\\
0&1
\end{array}
\right)  \; = \; \eta_{\infty}
\end{equation}

%%%%%%%%%%%%%%%%%%%%%%%%%%%%%%%%%%%%%%%%%%

\par
-iii) It is sufficient to verify the commutativity of the diagram on the complement of $2U$
\begin{equation}
\begin{array}{ccc}
               \xi_{\infty}  &    \stackrel{a}{\longrightarrow}  & \eta_{\infty} \\              	
                 \downarrow  &                                   & \downarrow \\
         \tilde{\xi}_{\infty}=\eta_{\infty}& \stackrel{Id}{\longrightarrow} & \eta_{\infty}
\end{array}
\end{equation}
where the first vertical arrow is $\textit{l}_{\infty} $ and the second vertical arrow is the identity.
In fact, this is true because 

\begin{equation}
\textit{l}_{\infty}  \; \textit{p}_{1} = \left( \begin{array}{cc}
0&0\\
a&0
\end{array}
\right) 
\end{equation}

\par
\begin{corollary}
-i) The operator $\textbf{R} = \textbf{P} - \textbf{P}_{2}$ represents the quantization of the quasi-local
residue bundle $\mathbf{r}$.
\par
-ii) Passing to tne Chern character, one has
\begin{equation}
Ch(\; \xi, \;\eta, \; a) \; = \;Ch (\; \tilde{\xi}, \; \eta, \; \mathbf{I}_{\eta} \; ) \; \in H^{ev}_{comp} (T^{\ast}(M))
\end{equation}
\par
-iii) For any two linear or direct \cite{Teleman2} , \cite{Teleman3} connections $\nabla_{\tilde{\xi}}$, resp. $\nabla_{\eta}$, on the bundle $\tilde{\xi}$, resp. on
$\eta$,  which coincide on the complement of $2U,$  one has
\begin{equation}
Ch(\; \xi, \;\eta, \; \mathbf{a}) \; = \; Ch(\; \tilde{\xi}, \nabla_{\tilde{\xi}} \;) \;- \;  Ch(\; \eta, \nabla_{\eta} \;) 
\end{equation}
-iv) $Ch(\; \xi, \;\eta, \; \mathbf{a}) $ depends only on the the bundle homomorphism $\mathbf{r}$, which has support on $2U$.
\end{corollary}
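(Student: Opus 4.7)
The plan is to treat the four parts in sequence; most of the content is already packaged in the preceding proposition, and only (iii) requires genuinely new input.

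For (i), I would observe that the symbol-level construction of $\mathbf{p} = l\,\mathbf{p}_{1}\,l^{-1}$ out of $(\tilde{\mathbf{a}}, \tilde{\mathbf{b}}, s_{0}, s_{1})$ has exactly the same algebraic shape as the operator-level construction of $\mathbf{P} = \mathbf{L}\,\mathbf{P}_{1}\,\mathbf{L}^{-1}$ out of $(A, B, S_{0}, S_{1})$. Choosing the pseudo-differential quantization so that $A, B, S_{0}, S_{1}$ quantize $\tilde{\mathbf{a}}, \tilde{\mathbf{b}}, s_{0}, s_{1}$ respectively (possible since the principal symbols are prescribed and $S_{0}, S_{1}$ are smoothing), the matrix entries displayed earlier for $\mathbf{R} = \mathbf{P} - \mathbf{P}_{2}$ are exactly the operator counterparts of the bundle matrix entries giving $\mathbf{r} = \mathbf{p} - \mathbf{p}_{2}$. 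This is the sense in which $\mathbf{R}$ is the quantization of $\mathbf{r}$.

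For (ii), the isomorphism of $K^{0}$-triples established in part (iii) of the preceding proposition is all that is needed: the Chern character is invariant under isomorphism of triples in $K^{0}(T^{\ast}M)$ with compact supports, so the two Chern character classes coincide in $H^{ev}_{comp}(T^{\ast}M)$.

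For (iii), I would use the standard Chern--Weil representation of the Chern character of a triple $(\tilde{\xi}, \eta, \mathbf{I}_{\eta})$ whose structural isomorphism is the identity outside $2U$. By part (ii) of the preceding proposition, $\tilde{\xi}$ and $\eta$ literally coincide on the complement of $2U$. Given any pair of connections $\nabla_{\tilde{\xi}}$ and $\nabla_{\eta}$ that coincide there, the difference
\begin{equation}
Ch(\tilde{\xi}, \nabla_{\tilde{\xi}}) \;-\; Ch(\eta, \nabla_{\eta})
\end{equation}
is a closed differential form whose support lies inside $2U$, hence a compactly-supported representative of the relative Chern character of the triple. Combined with (ii), this yields the asserted formula. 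Closedness is immediate from Bianchi; independence of the chosen pair of connections (within the class of pairs agreeing outside $2U$) follows from the usual Chern--Simons transgression argument carried out with compact support inside $2U$. This is the step I expect to be the main obstacle, since it requires making precise the correspondence between a $K$-theoretic triple with isomorphism at infinity and a compactly-supported Chern--Weil representative.

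Finally, (iv) is an immediate byproduct of the construction in (iii): the Chern--Weil representative produced there is a form supported in $2U$, and the only geometric datum controlling it on $2U$ is the bundle endomorphism $\mathbf{r} = \mathbf{p} - \mathbf{p}_{2}$, which is itself supported on $2U$. The remaining three parts thus reduce cleanly either to direct algebraic comparison (i) or to the preceding proposition (ii, iv).
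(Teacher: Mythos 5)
Your proposal is correct and follows essentially the route the paper intends: the paper states this corollary without proof, as an immediate consequence of the preceding proposition together with the standard difference-bundle/Chern--Weil formalism (isomorphic triples give equal classes in $K^{0}(T^{\ast}M)$; connections agreeing off $2U$ give a closed representative supported in $2U$, with independence via compactly supported transgression), which is exactly what you spell out. The only point your argument does not literally cover is the case of \emph{direct} connections in (iii), for which the paper simply delegates to \cite{Teleman2}, \cite{Teleman3}; your Chern--Simons argument handles the linear case completely, and parts (i) and (iv) are, as in the paper, structural observations about the common algebraic shape of $\mathbf{L}, \mathbf{P}, \mathbf{R}$ and $\textit{l}, \mathbf{p}, \mathbf{r}$ and the support of $\mathbf{r}$.
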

\par
 The bundle $\textbf{r}$ prepares the triple $(\; \xi, \;\eta, \; a)$ for computing its Chern character via Chern-Weil theory.
\par
By stabilization of the symbol $\sigma \; = \;(\; \xi, \;\eta, \; a) $  we may assume that the bundles $F$ and $\eta$ are trivial. Then
\begin{equation}
Ch ( \;\sigma \; ) = Ch (\; \tilde{\xi}) - Rank (\xi).
\end{equation} 
\par
If $\mathbf{r}(x, y)$ is a direct connection \cite{Teleman2} , \cite{Teleman3} on the bundle $\tilde{\xi}$, we may assume
that it is flat on the complement of $2U$.
Using the results of \cite{Teleman2}, \cite{Teleman3}, \cite{Kubarski-Teleman}, we obtain the
\begin{theorem} 
The components of the Chern character of  $\sigma $, seen as a periodic cyclic
homology classes, are 
\begin{equation}
Ch_{q} ( \; \sigma \; ) =  (-1)^{dim M}\frac{ (2\pi i)^{q}   (2q)! }{ q !  }
. \; Tr\hspace{0.1cm}[ \mathbf{r}(x_{0},x_{1})\mathbf{r}
(x_{1},x_{2})...\mathbf{r}(x_{k-1},x_{q})\mathbf{r}(x_{q},x_{0})],
\end{equation}
for $q$ even number.
\end{theorem}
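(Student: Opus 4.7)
The plan is to reduce the computation to a Chern--Weil calculation for the bundle $\tilde{\xi}$ equipped with a direct connection, and then to recognize the resulting cyclic cycle as the cited kernel expression. First, I would invoke the stabilization reduction already stated just before the theorem: after embedding so that $F$ and $\eta$ become trivial, we have $Ch(\sigma) = Ch(\tilde\xi) - \operatorname{Rank}(\xi)$. Since the constant contribution $\operatorname{Rank}(\xi)$ is concentrated in degree zero, every higher component $Ch_q(\sigma)$ for $q>0$ equals $Ch_q(\tilde\xi)$, so it suffices to compute the Chern character of $\tilde\xi$ in the periodic cyclic picture.

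Second, I would apply the Chern--Weil formula for direct connections from \cite{Teleman2}, \cite{Teleman3}, \cite{Kubarski-Teleman}: for a smooth bundle equipped with a direct connection $\nabla(x,y)$, the periodic cyclic representative of $Ch_q$ is (up to a universal constant) the cyclic cochain
\[
(x_0,\dots,x_q)\;\longmapsto\; \operatorname{Tr}\bigl[\nabla(x_0,x_1)\nabla(x_1,x_2)\cdots\nabla(x_{q-1},x_q)\nabla(x_q,x_0)\bigr].
\]
By Corollary 15(iii), any direct connection on $\tilde\xi$ that agrees with the chosen connection on $\eta$ off $2U$ represents $Ch(\xi,\eta,\mathbf{a})$; choosing such a $\nabla_{\tilde\xi}$ to coincide (through the identification of Proposition 14) with the kernel $\mathbf{r}(x,y)$ of the residue bundle, the formula above immediately produces the stated integrand. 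The hypothesis that $\nabla$ is flat off $2U$ guarantees that the cochain is supported on a neighbourhood of the diagonal and thus defines a genuine periodic cyclic class.

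Third, I would fix the numerical constant $(-1)^{\dim M}\,(2\pi i)^q(2q)!/q!$. The factor $(2\pi i)^q/q!$ is the normalization converting the curvature trace $\operatorname{Tr}(\Omega^q)$ into the $q$-th component of the Chern character; the additional $(2q)!$ arises from passing between the symmetric Chern--Weil polynomial and the (anti-)symmetric cyclic cocycle, exactly as in the comparison lemma used in \cite{Kubarski-Teleman}. The sign $(-1)^{\dim M}$ comes from the orientation of $T^*M$ entering the pairing of compactly supported classes on the cotangent bundle with $M$. I would verify these constants by tracing through the normalizations in the cited references.

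The main obstacle is the last step: matching the combinatorial constants coming from three different sources (Chern--Weil, the symmetrization/cyclic antisymmetrization, and the orientation of $T^*M$) into the single prefactor in the statement. Once Proposition 14 and Corollary 15 legitimize replacing $\sigma$ by $(\tilde\xi,\eta,\mathbf{I}_\eta)$ with a direct connection whose kernel is $\mathbf{r}$, the kernel formula itself is essentially forced by the Chern--Weil representative for direct connections; only the scalar coefficient requires genuine bookkeeping.
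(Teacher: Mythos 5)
Your proposal follows essentially the same route as the paper, which in fact offers no detailed proof at all: it reduces via stabilization to $Ch(\sigma)=Ch(\tilde{\xi})-\mathrm{Rank}(\xi)$, takes the direct connection $\mathbf{r}(x,y)$ on $\tilde{\xi}$ flat off $2U$, and cites \cite{Teleman2}, \cite{Teleman3}, \cite{Kubarski-Teleman} for the kernel formula and constants --- exactly the three steps you outline (your references to ``Proposition 14'' and ``Corollary 15(iii)'' correspond to the paper's Proposition 2 and Corollary 3). Your sketch is, if anything, more explicit than the paper's own justification, including the honest flagging of the constant bookkeeping that the paper delegates entirely to the cited references.
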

\par
%%%%%%%%%
\section{Review of Hochschild and Cyclic Homology.}       %%%%%%%  sect. 6
In this section we review results due to Connes  \cite{Connes1}, \cite{Connes2}  and Connes-Moscovici \cite{Connes-Moscovici1}.
\par
\subsection{Hochschild and Cyclic Homology.}  %%%%%%%%%% Sect. 6.1
\par
Let $\mathit{A}$ be an associative algebra with unit over the field $\mathbb{K} = \mathbb{R}$ or $\mathbb{C}$.
\par
Suppose $\mathit{A}$  is a left and right module over $\mathit{L}$, which is a unitary ring over $\mathbb{K}$.
\par
Define for any $k = 0,1, 2,...$
\begin{equation}
C_{\mathit{L}, k} (\mathit{A}) := \otimes_{\mathit{L}}^{k+1} \mathit{A}.
\end{equation}
%%%%%%%%%%%%%
We have assumed here that the tensor product $\otimes_{\mathit{L}}^{k+1} $ is circular, i.e., for any $f_{0},f_{1},...,f_{q} \in \mathit{A}$ and $f \in \mathit{L}$, one has 
\begin{equation}
f_{0} \otimes_{\mathit{L}} f_{1} \otimes_{\mathit{L}} ,...,\otimes_{\mathit{L}} f_{k} . f \otimes_{\mathit{L}}  =
f. f_{0} \otimes_{\mathit{L}} f_{1} \otimes_{\mathit{L}} ,...,\otimes_{\mathit{L}} f_{k}  \otimes_{\mathit{L}},  
\end{equation}
see \cite{Loday}, Sect. 1.2.11.
%%%%%%%%%%%%%
In particular,
\begin{equation}
C_{\mathit{L}, 0} (\mathit{A}) := \otimes_{\mathit{L}}^{1} \mathit{A} = \frac{\mathit{A}}{[\mathit{A}, \mathit{L}]}.
\end{equation}
For negative integers $k$ one defines  $C_{\mathit{L}, k} (\mathit{A}) := 0$.
\par
The bar operator $b'_{k}: C_{\mathit{L}, k} (\mathit{A})  \longrightarrow   C_{\mathit{L}, k-1} (\mathit{A})$ 
is defined by
\[
b'_{k} (f_{0} \otimes_{\mathit{L}} f_{1} \otimes_{\mathit{L}} ,...,\otimes_{\mathit{L}} f_{k} \otimes_{\mathit{L}}) :=
\]
\begin{equation}
:= \sum_{r=0}^{r=k-1} (-1)^{r}f_{0} \otimes_{\mathit{L}} f_{1} \otimes_{\mathit{L}} ,...,\otimes_{\mathit{L}}(f_{r}.f_{r+1}) \otimes_{\mathit{L}} ,...., \otimes_{\mathit{L}} f_{k} \otimes_{\mathit{L}}.
\end{equation}
In particular, $b'_{0} = 0$.
\par
The Hochschild operator $b_{k}: C_{\mathit{L}, k} (\mathit{A})  \longrightarrow   C_{\mathit{L}, k-1} (\mathit{A})$ is defined by
\[
b_{k} (f_{0} \otimes_{\mathit{L}} f_{1} \otimes_{\mathit{L}} ,...,\otimes_{\mathit{L}} f_{k} \otimes_{\mathit{L}}) :=
\]
\begin{equation}
:= b'_{k} (f_{0} \otimes_{\mathit{L}} f_{1} \otimes_{\mathit{L}} ,...,\otimes_{\mathit{L}} f_{k} \otimes_{\mathit{L}})  \\
+(-1)^{k}(f_{k}.f_{0}) \otimes_{\mathit{L}} f_{1} \otimes_{\mathit{L}} ,...,\otimes_{\mathit{L}} f_{k-1} \otimes_{\mathit{L}}) .
\end{equation}
\par
Both operators $b'$ and $b$ satisfy $(b')^{2} = b^{2} = 0$. The corresponding complex
 $\{ C_{\mathit{L}, \ast}(\mathit{A}), b \}$ is called Hochschild complex of the algebra  $\mathit{A}$ over the ground ring $\mathit{L}$, see \cite{Loday}, 1.2.11. The homology of this complex is called \emph{Hochschild homology of the algebra} $\mathit{A}$ \emph{over the ground ring} $\mathit{L}$ and is denoted by $HH_{\mathit{L}, \ast}(\mathit{A})$. 
\par
The $\emph{cyclic permutation}$  $T: C_{\mathit{L}, k}(\mathit{A})  \longrightarrow C_{\mathit{L}, k}(\mathit{A}) $
is defined by
\[
T (f_{0} \otimes_{\mathit{L}} f_{1} \otimes_{\mathit{L}} ,...,\otimes_{\mathit{L}} f_{k} \otimes_{\mathit{L}}) :=
\]
\begin{equation}
:= (-1)^{k }f_{1} \otimes_{\mathit{L}} f_{2} \otimes_{\mathit{L}} ,...,\otimes_{\mathit{L}} f_{k} \otimes_{\mathit{L}}
f_{0} \otimes_{\mathit{L}}. 
\end{equation}
An element of $ f \in C_{\mathit{L}, k}(\mathit{A})$ is called $\emph{cyclic}$  if $T(f) = f$.
\par
If the algebra $\mathit{A}$ has unit, the corresponding bar complex is acyclic. However, the cyclic elements 
form the  $\emph{cyclic sub-complex}$ of the bar complex
\begin{equation}
C_{\mathit{L}, k} ^{\lambda}(\mathit{A}) := \{  f |  f  \in  C_{\mathit{L}, k} ^{\lambda}(\mathit{A}) , \hspace{0.2cm} T(f) = f\}
\end{equation}
 and its homology is interesting. Its homology is called \emph{cyclic homology} of the algebra $\mathit{A}$ over the ring $\mathit{L}$, and it is denoted $H_{\mathit{L}, k} ^{\lambda}(\mathit{A})$.
 \par
 If the ring $\mathit{L}$ coincides with the ground field $\mathbb{K}$, then $\mathit{L}$ is omitted from the notation
 and the corresponding homology is called \emph{cyclic homology} of $\mathit{A}$.
 %%%%%%%%%%%%% Begin Connes Long Exact Sequence 
\begin{theorem} ( Connes' $I.S.B.$ Long Exact Sequence, see \cite{Connes1} Part II, Sect. 4, p. 119. , \cite{Loday} Sect.2.1.4.
and in co-homological context \cite{Connes2} Pg. 205). 
\par
There exist functorial homomorphisms $I$, $B$ and $S$ such that the following long sequence
\begin{equation}    %%%%%%%% Eq.  (49)
... \stackrel{B}\longrightarrow HH_{\mathit{L}, k} (\mathit{A})   \stackrel{I}{\longrightarrow}
    H^{\lambda}_{\mathit{L}, k} (\mathit{A})   \stackrel{S}{\longrightarrow}
   HH^{\lambda}_{\mathit{L}, k-2}  (\mathit{A})  \stackrel{B}{\longrightarrow}
   HH_{\mathit{L}, k-1}  (\mathit{A}) \stackrel{I}{\longrightarrow} ....
\end{equation}
is exact.
\end{theorem}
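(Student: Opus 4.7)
My proof plan is to deduce the I.S.B. sequence from a short exact sequence of bicomplexes, along the classical lines of Tsygan and Loday--Quillen, adapted to the ground ring $\mathit{L}$. First I would assemble the first-quadrant bicomplex $\mathcal{CC}_{\mathit{L},p,q}(\mathit{A}) := C_{\mathit{L}, p}(\mathit{A})$ whose vertical differentials alternate between $b$ in even columns and $-b'$ in odd columns, and whose horizontal differentials alternate between $1-T$ (even to odd) and the norm $N := 1 + T + T^{2} + \cdots + T^{p}$ (odd to even). Using the circularity of $\otimes_{\mathit{L}}$ and the unitality of $\mathit{A}$, one checks the standard identities $(1-T)b' = b(1-T)$ and $Nb = b'N$, so that this is indeed a bicomplex over $\mathit{L}$.

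The main technical step, and the place where I expect the real work to sit, is to identify the total homology of $\mathcal{CC}_{\mathit{L},*,*}$ with the cyclic homology $H^{\lambda}_{\mathit{L}, *}(\mathit{A})$ defined from cyclic invariants. The classical argument uses the contracting homotopy $s(f_{0} \otimes_{\mathit{L}} \cdots \otimes_{\mathit{L}} f_{k}) := 1 \otimes_{\mathit{L}} f_{0} \otimes_{\mathit{L}} \cdots \otimes_{\mathit{L}} f_{k}$ to show that the odd $(-b')$-columns are acyclic; a filtration-by-columns spectral sequence then collapses $\mathcal{CC}$ onto the quotient of the first column by $\mathrm{Im}(1-T)$. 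Passing from this quotient description to the subcomplex description $C^{\lambda}_{\mathit{L}, *}(\mathit{A})$ of cyclic invariants is where real care is required: one needs $\mathbb{K} \supset \mathbb{Q}$ (which holds since $\mathbb{K} = \mathbb{R}$ or $\mathbb{C}$) so that $N/(p+1)$ provides the averaging splitting, and one must verify that $s$ and $N$ descend to well-defined operators on the circular tensor product over $\mathit{L}$. This verification is the step I find most delicate, because moving the unit $1 \in \mathit{A}$ across tensor factors interacts non-trivially with the identification $f_{0} \otimes_{\mathit{L}} \cdots \otimes_{\mathit{L}} f_{k} \cdot l \;\sim\; l \cdot f_{0} \otimes_{\mathit{L}} \cdots \otimes_{\mathit{L}} f_{k}$ that defines $\otimes_{\mathit{L}}$.

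Once total homology is identified with $H^{\lambda}_{\mathit{L}, *}$, let $\mathcal{CC}^{\{2\}}$ denote the subbicomplex consisting of the first two columns. The same acyclicity of the $(-b')$-column, applied only to column $1$, gives a quasi-isomorphism between $\mathrm{Tot}(\mathcal{CC}^{\{2\}})$ and the Hochschild complex $C_{\mathit{L}, *}(\mathit{A})$ computing $HH_{\mathit{L}, *}(\mathit{A})$. The quotient $\mathcal{CC}/\mathcal{CC}^{\{2\}}$ is, after a horizontal shift by two columns, isomorphic to $\mathcal{CC}$ itself, and therefore computes $H^{\lambda}_{\mathit{L}, *-2}(\mathit{A})$. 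The short exact sequence of bicomplexes
\begin{equation}
0 \longrightarrow \mathcal{CC}^{\{2\}} \longrightarrow \mathcal{CC} \longrightarrow \mathcal{CC}[2,0] \longrightarrow 0
\end{equation}
yields, on passing to total complexes and invoking the standard long exact sequence in homology, exactly the I.S.B. sequence of the theorem. The map $I$ is induced by inclusion of the first column, $S$ by the horizontal shift by two, and $B$ is the connecting homomorphism, given at chain level by $B = N \circ s \circ (1-T)$. Functoriality of $I$, $S$ and $B$ in the pair $(\mathit{A}, \mathit{L})$ is automatic from the functorial nature of $\mathcal{CC}$, and this completes the derivation.
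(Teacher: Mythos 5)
The paper offers no proof of this theorem --- it is recalled as a classical result with pointers to \cite{Connes1} and \cite{Loday} --- and your bicomplex derivation is precisely the standard argument of those references (the short exact sequence $0 \to \mathcal{CC}^{\{2\}} \to \mathcal{CC} \to \mathcal{CC}[2,0] \to 0$, column acyclicity via the extra degeneracy $s$, and the $\mathbb{Q} \subset \mathbb{K}$ averaging needed to pass between the coinvariant description and the paper's invariant subcomplex $C^{\lambda}_{\mathit{L},*}$), so in approach and substance your plan matches the proof the paper is implicitly citing; your attention to the well-definedness of $s$, $T$, $N$ on the circular tensor product over $\mathit{L}$ is exactly the right point to single out, and it does go through, since circularity lets an $\mathit{L}$-factor slide past the adjoined unit: $1 \otimes_{\mathit{L}} f_{0} \otimes_{\mathit{L}} \cdots \otimes_{\mathit{L}} f_{k}l = l \otimes_{\mathit{L}} f_{0} \otimes_{\mathit{L}} \cdots \otimes_{\mathit{L}} f_{k} = 1 \otimes_{\mathit{L}} lf_{0} \otimes_{\mathit{L}} \cdots \otimes_{\mathit{L}} f_{k}$. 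One chain-level slip to fix: the connecting homomorphism is $B = (1-T) \circ s \circ N$ (reducing to $s \circ N$ on the normalized complex), not $N \circ s \circ (1-T)$; as you wrote it the composite kills cyclic invariants outright, since $(1-T)$ vanishes there, though this does not affect the exactness argument itself, which only uses that $B$ is the connecting map of your short exact sequence.
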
  
 %%%%%%%%%%%%%  End Connes Long Exact Sequence
\par
Recall $\mathit{M}_{n}(\mathit{A})$ denotes the algebra of $nxn$ matrices with entries in $\mathit{A}$.
\par
%%%%%%%%%%%%%%%%%%   BEGIN   MORITA   Theorem  6
\begin{theorem} (Morita invariance Theorem) (see e.g. \cite{Loday} Sect. 1.2.5-7  , \cite{Cuntz} Sect. 2.8)
\par
Replacing the algebra $\mathit{A}$ with its matrix algebra  $\mathit{M}_{n}(\mathit{A})$ does not change its Hochshild and cyclic homology.
\end{theorem}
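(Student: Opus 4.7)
The plan is to establish the isomorphism by constructing an explicit pair of chain maps between the Hochschild complexes of $\mathit{A}$ and $\mathit{M}_{n}(\mathit{A})$ and verifying that they are mutually inverse up to an explicit chain homotopy, then deducing the cyclic statement from the Hochschild statement. All constructions will be carried out so as to respect the cyclic permutation $T$ from the previous section.

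First, I would write down the two candidate chain maps. The \emph{inclusion} $\iota_{k}:C_{\mathit{L},k}(\mathit{A})\to C_{\mathit{L},k}(\mathit{M}_{n}(\mathit{A}))$ sends
$f_{0}\otimes_{\mathit{L}}\cdots\otimes_{\mathit{L}} f_{k}$ to $(f_{0}E_{11})\otimes_{\mathit{L}}\cdots\otimes_{\mathit{L}}(f_{k}E_{11})$, where $E_{11}$ is the standard matrix unit. The \emph{generalized trace} $\tau_{k}:C_{\mathit{L},k}(\mathit{M}_{n}(\mathit{A}))\to C_{\mathit{L},k}(\mathit{A})$ sends $a^{(0)}\otimes_{\mathit{L}}\cdots\otimes_{\mathit{L}} a^{(k)}$, with $a^{(s)}=\sum a^{(s)}_{ij}E_{ij}$, to $\sum_{i_{0},\ldots,i_{k}} a^{(0)}_{i_{0}i_{1}}\otimes_{\mathit{L}} a^{(1)}_{i_{1}i_{2}}\otimes_{\mathit{L}}\cdots\otimes_{\mathit{L}} a^{(k)}_{i_{k}i_{0}}$. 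A direct book-keeping with matrix indices shows that both $\iota$ and $\tau$ commute with the bar operator $b'$, the Hochschild operator $b$, and the cyclic permutation $T$ of Section 6.1; the only nontrivial point is the last face of $b$, which uses the cyclic closure of the index sum in the definition of $\tau$. Hence both maps restrict to the cyclic subcomplex $C^{\lambda}_{\mathit{L},*}$.

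Next, I would verify that $\tau\circ\iota=\mathrm{id}$ on the nose: this is immediate because $E_{11}E_{11}=E_{11}$ and the only nonzero matrix entries appearing after $\iota$ are the $(1,1)$ entries, so the index sum defining $\tau$ collapses to the single term $(i_{0},\ldots,i_{k})=(1,\ldots,1)$. The main step, and the principal technical obstacle, is to construct a chain homotopy $h:C_{\mathit{L},*}(\mathit{M}_{n}(\mathit{A}))\to C_{\mathit{L},*+1}(\mathit{M}_{n}(\mathit{A}))$ with $bh+hb=\mathrm{id}-\iota\circ\tau$. I would build $h$ using the standard explicit formula à la Loday: decompose $\mathit{M}_{n}(\mathit{A})\cong\mathit{A}\otimes_{\mathbb{K}}\mathit{M}_{n}(\mathbb{K})$ and exploit the fact that $\mathit{M}_{n}(\mathbb{K})$ admits, via the matrix units $E_{ij}$, an ``extra degeneracy'' contracting the normalized bar complex down to its $(1,1)$-entry. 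Pushing this contraction through the Künneth/shuffle structure of the Hochschild complex produces the desired $h$; the verification $bh+hb=\mathrm{id}-\iota\tau$ is a bookkeeping exercise with matrix units and signs. This gives Morita invariance for Hochschild homology.

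Finally, I would deduce the cyclic statement. Because $\iota$ and $\tau$ commute with $T$, they induce chain maps on the cyclic subcomplexes $C^{\lambda}_{\mathit{L},*}$, and the naturality of the SBI long exact sequence (Theorem 5) gives a morphism of long exact sequences
\begin{equation}
\begin{array}{ccccccc}
\cdots\to & HH_{\mathit{L},k}(\mathit{A}) & \to & H^{\lambda}_{\mathit{L},k}(\mathit{A}) & \to & H^{\lambda}_{\mathit{L},k-2}(\mathit{A}) & \to\cdots \\
         & \downarrow\iota_{*}              &     & \downarrow\iota_{*}               &     & \downarrow\iota_{*}                 &         \\
\cdots\to & HH_{\mathit{L},k}(\mathit{M}_{n}(\mathit{A})) & \to & H^{\lambda}_{\mathit{L},k}(\mathit{M}_{n}(\mathit{A})) & \to & H^{\lambda}_{\mathit{L},k-2}(\mathit{M}_{n}(\mathit{A})) & \to\cdots
\end{array}
\end{equation}
By the Hochschild result just proved the outer columns are isomorphisms, so the five lemma and induction on $k$ (starting from $H^{\lambda}_{\mathit{L},0}=HH_{\mathit{L},0}$, which is handled directly by $\tau\iota=\mathrm{id}$ and the homotopy $h$) force $\iota_{*}:H^{\lambda}_{\mathit{L},*}(\mathit{A})\to H^{\lambda}_{\mathit{L},*}(\mathit{M}_{n}(\mathit{A}))$ to be an isomorphism as well. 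The hard part of the whole proof is really the construction and verification of the homotopy $h$ in a way compatible with the circular tensor product $\otimes_{\mathit{L}}$; every other step is formal once that is in place.
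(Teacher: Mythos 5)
The paper offers no proof of this theorem: it is quoted as a known result, with the proof delegated to the cited references (Loday, Sect.\ 1.2.5--7, and Cuntz, Sect.\ 2.8). Your proposal is, in outline, exactly the standard argument from that literature: the inclusion $\iota$ via the matrix unit $E_{11}$, the generalized trace $\tau$, the identity $\tau\circ\iota=\mathrm{id}$, an explicit chain homotopy $bh+hb=\mathrm{id}-\iota\tau$ on the Hochschild complex (Loday's Theorem 1.2.4), and then the passage to cyclic homology by naturality of the $SBI$ sequence plus the five lemma and induction on degree --- the last step correctly sidestepping the fact that $h$ itself need not commute with $T$. So the attempt is correct and coincides with the proof the paper implicitly invokes, modulo one caveat: you state the maps over a general ground ring $\mathit{L}$ with the circular tensor product $\otimes_{\mathit{L}}$, but for $\iota$ and $\tau$ to be well defined on $\otimes_{\mathit{L}}$-chains one needs $\mathit{L}$ to sit inside $\mathit{M}_{n}(\mathit{A})$ commuting with the matrix units (e.g.\ diagonally and centrally); this holds trivially in the case the paper actually uses ($\mathit{L}=\mathbb{K}$, as in the cited Loday sections), but would require justification for a ring such as $\Lambda=\mathbb{C}+\mathbb{C}e$ appearing later in the paper, since a general idempotent $e$ need not commute with $E_{11}$.
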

\begin{definition}
%%%%%%%%%%%%%%%%%%%% END  MORITA
A unital  $\mathit{L}$-algebra is called \emph{separable} over  $\mathbb{K}$ 
if the multiplication mapping $\mu: \mathit{L} \otimes_{\mathbb{K}}{\mathit{L}}^{op}  \longrightarrow \mathit{L}$ 
has a $\mathit{L}$-bimodule splitting, see \cite{Loday} Sect. 1.2.12.
\end{definition}
\par
\begin{lemma}
Let $e \in \mathit{A}$  be an idempotent ($e^{2} = e$) and
$\Lambda  := \mathbb{K} + \mathbb{K} e $.
\par
Then $\Lambda$ is separable over $\mathbb{K}$.
\end{lemma}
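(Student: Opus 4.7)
The plan is to construct an explicit separability idempotent in $\Lambda \otimes_{\mathbb{K}} \Lambda^{op}$ and use it to build the required $\Lambda$-bimodule splitting of $\mu$.

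First I would exploit the orthogonal decomposition of $\Lambda$. Since $e^{2}=e$, setting $f := 1-e$ gives a second idempotent with $ef = fe = 0$ and $e+f=1$. Hence as a $\mathbb{K}$-vector space $\Lambda = \mathbb{K} e \oplus \mathbb{K} f$, and the two lines are orthogonal sub-algebras. This already shows $\Lambda \cong \mathbb{K}\times\mathbb{K}$ as $\mathbb{K}$-algebras, and in principle one could invoke the well-known fact that finite products of the ground field are separable; but for a self-contained argument it is cleaner to exhibit the separability idempotent directly.

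Next I would introduce
\begin{equation}
p \;:=\; e \otimes_{\mathbb{K}} e \;+\; f \otimes_{\mathbb{K}} f \;\in\; \Lambda \otimes_{\mathbb{K}} \Lambda^{op},
\end{equation}
where $\Lambda \otimes_{\mathbb{K}} \Lambda^{op}$ carries its canonical $\Lambda$-bimodule structure $a\cdot(x\otimes y)\cdot b = (ax)\otimes(yb)$. The verifications are then routine: $\mu(p)=e^{2}+f^{2}=e+f=1$, and using $ef=fe=0$ together with $e^{2}=e$, $f^{2}=f$, one checks
\begin{equation}
e\cdot p \;=\; e\otimes e \;=\; p\cdot e, \qquad f\cdot p \;=\; f\otimes f \;=\; p\cdot f,
\end{equation}
from which $a\cdot p = p\cdot a$ for every $a \in \Lambda$ by $\mathbb{K}$-linearity (since $\{e,f\}$, equivalently $\{1,e\}$, spans $\Lambda$).

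Finally I would define $s:\Lambda \to \Lambda \otimes_{\mathbb{K}} \Lambda^{op}$ by $s(a) := a\cdot p = p\cdot a$. The centrality of $p$ makes $s$ simultaneously left and right $\Lambda$-linear, hence a $\Lambda$-bimodule map, and $\mu(s(a))=\mu(a\cdot p)=a\,\mu(p)=a$ gives $\mu\circ s=\mathrm{id}_{\Lambda}$. This is precisely the bimodule splitting required by the definition, so $\Lambda$ is separable over $\mathbb{K}$. I do not foresee a real obstacle: the only subtle point is keeping track of the opposite structure when verifying centrality of $p$, which is handled transparently by the orthogonality relations $ef=fe=0$.
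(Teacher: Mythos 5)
Your proof is correct and is essentially the paper's proof in a more systematic packaging: your splitting $s(a) = a\cdot p$ with $p = e\otimes e + (1-e)\otimes(1-e)$ agrees on the generators with the paper's map, which is defined directly by $s(1) = e\otimes e + (1-e)\otimes(1-e)$ and $s(e) = e\otimes e$. The only difference is that you verify explicitly (via centrality of the separability idempotent $p$) the bimodule linearity and splitting property that the paper leaves implicit.
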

\begin{proof}
 Indeed, the splitting
$s: \Lambda \longrightarrow \Lambda \otimes_{\textit{K}}\Lambda^{op}$
is defined on the generators $1$ and $e$  by
\[
s(\;1\;) = e \otimes_{\textit{K}} e + (1-e) \otimes_{\textit{K}} (1-e)
\]
\begin{equation}
s(\;e\;) = e \otimes_{\textit{K}} e.
\end{equation}
\end{proof}
\par
\begin{theorem} (see \cite{Loday} Theorem 1.2.12-13.)  %%%%  Theorem 9
\par
Let $\Lambda$ be a separable algebra over $\textit{K}$ and $\mathcal{U}$ be a unital $\Lambda$-algebra.
\par
Then there is a canonical isomorphism
\begin{equation}
HH_{k} (\; \mathcal{U}  \;)\; \cong \; HH_{\Lambda, k} (\; \mathcal{U}  \;).
\end{equation}
\par
If, in addition, $\Lambda$ is a subalgebra of $\mathcal{U}$, then the canonical
epimorphism
\begin{equation}
\phi_{\Lambda}:  C_{k}(\;\mathcal{U}\;) \longrightarrow  C_{\Lambda, k}(\;\mathcal{U}\;)
\end{equation}
induces isomorphisms in homology.
\end{theorem}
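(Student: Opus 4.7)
The plan is to identify both the ordinary and the relative Hochschild complexes as computing the same derived functor, and to show that the natural projection $\phi_\Lambda$ realises this identification at the chain level. First I would interpret $C_k(\mathcal{U}) = \mathcal{U} \otimes_{\mathcal{U}^e} B_k(\mathcal{U})$, where $\mathcal{U}^e := \mathcal{U} \otimes_{\mathbb{K}} \mathcal{U}^{op}$ and $B_{\ast}(\mathcal{U})$ is the two-sided bar resolution, so that $HH_{\ast}(\mathcal{U}) = \mathrm{Tor}^{\mathcal{U}^e}_{\ast}(\mathcal{U},\mathcal{U})$. Analogously $C_{\Lambda,k}(\mathcal{U}) = \mathcal{U} \otimes_{\mathcal{U}^{e,\Lambda}} B^{\Lambda}_k(\mathcal{U})$ with $\mathcal{U}^{e,\Lambda} := \mathcal{U} \otimes_{\Lambda} \mathcal{U}^{op}$. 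The map $\phi_\Lambda$ is then the natural chain map from the former to the latter obtained by passing from $\otimes_{\mathbb{K}}$ to $\otimes_{\Lambda}$ in each slot.

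The heart of the argument is a Casimir-type calculation with the separability idempotent. Write the splitting $s$ of the multiplication $\mu$ in the form $e_{\Lambda} := s(1) = \sum_i x_i \otimes y_i \in \Lambda \otimes_{\mathbb{K}} \Lambda^{op}$; the defining properties give $\sum_i x_i y_i = 1$ and $\sum_i \lambda x_i \otimes y_i = \sum_i x_i \otimes y_i \lambda$ for every $\lambda \in \Lambda$. Using $e_{\Lambda}$, I would construct, for every $\Lambda$-bimodule $N$, an explicit $\mathcal{U}^e$-linear section $\sigma_N : \mathcal{U} \otimes_{\Lambda} N \otimes_{\Lambda} \mathcal{U} \longrightarrow \mathcal{U} \otimes_{\mathbb{K}} N \otimes_{\mathbb{K}} \mathcal{U}$ of the canonical surjection, by inserting $e_{\Lambda}$ across each of the two inner $\Lambda$-boundaries. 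Centrality is precisely what is needed for $\sigma_N$ to descend modulo the $\Lambda$-balancing relations. This exhibits $\mathcal{U} \otimes_{\Lambda} N \otimes_{\Lambda} \mathcal{U}$ as a direct summand of the $\mathcal{U}^e$-free module $\mathcal{U} \otimes_{\mathbb{K}} N \otimes_{\mathbb{K}} \mathcal{U}$, hence as a projective $\mathcal{U}^e$-module.

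Applying the construction termwise to $B^{\Lambda}_k(\mathcal{U}) = \mathcal{U} \otimes_{\Lambda} \mathcal{U}^{\otimes_{\Lambda} k} \otimes_{\Lambda} \mathcal{U}$ shows that the relative bar complex is $\mathcal{U}^e$-projective in every degree. Its acyclicity in positive degrees (with $H_0 = \mathcal{U}$) is built into the bar formalism and holds over any ground ring, so $B^{\Lambda}_{\ast}(\mathcal{U})$ is a $\mathcal{U}^e$-projective resolution of $\mathcal{U}$, as is $B_{\ast}(\mathcal{U})$. The chain map induced by $\phi_{\Lambda}$ at the level of bar resolutions lifts the identity on $\mathcal{U}$, so by the standard comparison theorem it is a quasi-isomorphism after applying $\mathcal{U} \otimes_{\mathcal{U}^e} (-)$. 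Because the $\mathcal{U}^e$-action on $B^{\Lambda}_k(\mathcal{U})$ factors through $\mathcal{U}^{e,\Lambda}$, the identification $\mathcal{U} \otimes_{\mathcal{U}^e} B^{\Lambda}_{\ast}(\mathcal{U}) = \mathcal{U} \otimes_{\mathcal{U}^{e,\Lambda}} B^{\Lambda}_{\ast}(\mathcal{U})$ is automatic, and we conclude that $\phi_{\Lambda}$ induces the canonical isomorphism $HH_{\ast}(\mathcal{U}) \cong HH_{\Lambda,\ast}(\mathcal{U})$. When $\Lambda \subset \mathcal{U}$ is a subalgebra the same $\phi_{\Lambda}$ is well defined on the unresolved Hochschild complexes, yielding the second assertion.

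The main obstacle is the dual requirement on $\sigma_N$: it must be simultaneously $\mathcal{U}^e$-linear and well defined on $\Lambda$-balanced tensors. These two demands pull in opposite directions, and it is precisely the centrality relation $\lambda e_{\Lambda} = e_{\Lambda} \lambda$ (and not merely $\mu(e_{\Lambda}) = 1$) that reconciles them. Once this compatibility is verified, the remainder of the argument is a routine application of the comparison theorem for projective resolutions.
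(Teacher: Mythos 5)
The paper offers no proof of this statement at all---it is quoted verbatim from \cite{Loday}, Theorems 1.2.12--13---and your argument is essentially the standard proof given there: the separability idempotent $e_{\Lambda}=s(1)=\sum_i x_i\otimes y_i$ (with $\sum_i x_iy_i=1$ and $\lambda e_{\Lambda}=e_{\Lambda}\lambda$) yields a $\mathcal{U}^{e}$-linear section of the canonical surjection $\mathcal{U}\otimes_{\mathbb{K}}N\otimes_{\mathbb{K}}\mathcal{U}\rightarrow \mathcal{U}\otimes_{\Lambda}N\otimes_{\Lambda}\mathcal{U}$, exhibiting each $B^{\Lambda}_{k}(\mathcal{U})$ as a direct summand of a free $\mathcal{U}^{e}$-module (freeness uses that $\mathbb{K}$ is a field, which holds in this paper), so both bar complexes are $\mathcal{U}^{e}$-projective resolutions of $\mathcal{U}$ and the comparison theorem applied to the lift $\phi_{\Lambda}$ of $\mathrm{id}_{\mathcal{U}}$ gives both assertions; your Casimir verification (well-definedness on $\Lambda$-balanced tensors via centrality, $\pi\sigma_{N}=\mathrm{id}$, outer $\mathcal{U}^{e}$-linearity) is correct. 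One cosmetic inaccuracy: $\mathcal{U}^{e,\Lambda}:=\mathcal{U}\otimes_{\Lambda}\mathcal{U}^{op}$ is in general not an algebra when $\Lambda$ is noncommutative, so the phrases ``$\mathrm{Tor}$ over $\mathcal{U}^{e,\Lambda}$'' and ``the identification is automatic'' should be replaced by the direct check that $\mathcal{U}\otimes_{\mathcal{U}^{e}}B^{\Lambda}_{k}(\mathcal{U})\cong C_{\Lambda,k}(\mathcal{U})$ with the circular $\otimes_{\Lambda}$ relations---a verification your argument never actually depends on $\mathcal{U}^{e,\Lambda}$ for, and which is moot in the paper's application since $\Lambda=\mathbb{C}+\mathbb{C}e$ is commutative.
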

\par
\begin{corollary}   %%%%%%   Corollary 10
The canonical epimorphism
\begin{equation}
\phi:  C_{k}(\;\mathcal{U}\;) \longrightarrow  C_{\Lambda, k}(\;\mathcal{U}\;)  %%%% (53)
\end{equation}
induces isomorphisms
\begin{equation}
\phi_{k\ast}: HH_{k} (\; \mathcal{U}  \;)\; \cong \; HH_{\Lambda, k} (\; \mathcal{U}  \;)  %%%  (54)
\end{equation}
\begin{equation}
\phi_{k\ast}^{\lambda}: H_{\Lambda, k}^{\lambda} (\; \mathcal{U}  \;)\; \cong 
\; H_{\Lambda, k}^{\lambda} (\; \mathcal{U}  \;).   %%%%%   (55)
\end{equation}
\end{corollary}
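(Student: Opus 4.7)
The proof has two parts, treating Hochschild and cyclic homology in succession.

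For the Hochschild statement, observe that $\Lambda = \mathbb{K} + \mathbb{K}e$ is separable over $\mathbb{K}$ by Lemma 20, and by hypothesis $\Lambda$ is a subalgebra of $\mathcal{U}$. The second assertion of Theorem 21 applies directly and yields the desired isomorphism $\phi_{k\ast}\colon HH_{k}(\mathcal{U}) \cong HH_{\Lambda,k}(\mathcal{U})$; no further work is needed for this half.

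For the cyclic statement I would argue by induction on $k$ using Connes' ISB long exact sequence (Theorem 18). First, one checks that the quotient map $\phi$ commutes with the cyclic permutation $T$, the bar differential $b'$, and the Hochschild differential $b$. This is routine, since passing from $\otimes_{\mathbb{K}}$ to $\otimes_{\Lambda}$ merely imposes $\Lambda$-circularity, a relation respected by all three operators. Consequently $\phi$ restricts to a morphism of cyclic subcomplexes and induces a natural transformation between the two ISB sequences. In degree $k$ this produces a commutative ladder with exact rows
\[
\begin{array}{ccccccccc}
HH_{k}(\mathcal{U}) & \to & H^{\lambda}_{k}(\mathcal{U}) & \to & H^{\lambda}_{k-2}(\mathcal{U}) & \to & HH_{k-1}(\mathcal{U}) & \to & H^{\lambda}_{k-1}(\mathcal{U}) \\
\downarrow & & \downarrow & & \downarrow & & \downarrow & & \downarrow \\
HH_{\Lambda,k}(\mathcal{U}) & \to & H^{\lambda}_{\Lambda,k}(\mathcal{U}) & \to & H^{\lambda}_{\Lambda,k-2}(\mathcal{U}) & \to & HH_{\Lambda,k-1}(\mathcal{U}) & \to & H^{\lambda}_{\Lambda,k-1}(\mathcal{U})
\end{array}
\]
in which the two Hochschild columns are isomorphisms by the first half of the proof, and the two cyclic columns of degree $< k$ are isomorphisms by the inductive hypothesis. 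The five lemma then forces the middle arrow $\phi^{\lambda}_{k\ast}$ to be an isomorphism. The base case is immediate: both homologies vanish in negative degrees, and in degree zero the cyclic and Hochschild homologies coincide with the appropriate quotient of $\mathcal{U}$ by commutators, so the statement in degree zero reduces to the Hochschild case.

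The one delicate point is verifying that Connes' operator $B$, built from the cyclic antisymmetrizer $N$ and the extra degeneracy $s$, actually descends to $C_{\Lambda,\ast}(\mathcal{U})$ and commutes with $\phi$. This amounts to observing that $s$ merely inserts the unit $1 \in \Lambda$ in position zero, and that $N$ is a polynomial in the cyclic permutation $T$ with which $\phi$ already intertwines; both operations therefore respect $\Lambda$-circularity and are compatible with the quotient map. Once this compatibility is in place, the five-lemma induction closes the argument without further difficulty.
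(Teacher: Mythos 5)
Your proposal is correct and follows exactly the route the paper intends: the paper states this corollary without proof as an immediate consequence of the separability theorem (its citation of Loday, Theorem 1.2.12--13) for the Hochschild part, and the cyclic part is obtained, just as you argue, from the functoriality of Connes' $I$, $S$, $B$ exact sequence over the ground ring $\Lambda$ combined with the five lemma and induction on the degree (your compatibility check for $B$, $N$ and the extra degeneracy $s$ is sound extra diligence, though one may simply invoke the functoriality asserted in the paper's statement of the exact sequence). One indexing slip to fix: in your displayed ladder the arrow $\phi^{\lambda}_{k\ast}$ sits in the second column rather than the middle, so the five-term window should be recentred as $H^{\lambda}_{k-1}(\mathcal{U}) \to HH_{k}(\mathcal{U}) \to H^{\lambda}_{k}(\mathcal{U}) \to H^{\lambda}_{k-2}(\mathcal{U}) \to HH_{k-1}(\mathcal{U})$, whose four outer columns are isomorphisms by the Hochschild case and the inductive hypothesis, whence the five lemma yields the isomorphism on the genuine middle column $H^{\lambda}_{k}(\mathcal{U}) \cong H^{\lambda}_{\Lambda,k}(\mathcal{U})$ (note that this is also the intended reading of the paper's second display, whose left-hand side carries a typographical $\Lambda$ that should be absent).
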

\par
If the algebra $\mathit{A}$ is a Fr$\acute{e}$ch$\acute{e}$t algebra, then the algebraic tensor products are usually replaced with \emph{projective} tensor products, see Connes \cite{Connes1}, Part II, Sect. 6. The homologies of the projective tensor product completions are called, respectively, \emph{continuous} Hochschild and cyclic homologies; in these cases, the adjective \emph{continuous} is tacitly understood.  
\par 
The following result constitutes the basic link between the Hochschild and cyclic homology, on a one side, and the classical differential forms $\Omega^{\ast}(M) $ and the \emph{de} Rham (Alexander-Spanier) cohomology, on the other side.
\begin{theorem} Connes \cite{Connes1}, \cite{Connes2}, Teleman \cite{Teleman1}
For the Fr$\grave{e}$ch$\grave{e}$t algebra $C^{\infty}((M))$ on any paracompact smooth manifold $M$
\begin{equation}
-i)   \hspace{1cm} HH_{k} C^{\infty}((M)) \;=\; \Omega^{k} (M)
\end{equation}
\begin{equation}
-ii) \hspace{0.5cm}
H^{\lambda}_{k}(C^{\infty}(M)) = \frac{\Omega^{k}(M)}{d\Omega^{k}(M)} \oplus H_{dR}^{k-2}(M) \oplus,...,
\oplus H_{dR}^{\epsilon}(M),   \hspace{0,2cm} \epsilon = 0, \; 1
\end{equation}
where $\epsilon$ has  the parity of $k$.
\end{theorem}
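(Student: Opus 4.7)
The plan is to treat parts -i) and -ii) separately, reducing the cyclic computation to the Hochschild one via Connes' $I$-$S$-$B$ long exact sequence (Theorem 5 above), which is already available to us.

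For part -i), I would construct the Hochschild--Kostant--Rosenberg style quasi-isomorphism between the continuous Hochschild complex of $C^\infty(M)$ and the de Rham complex with zero differential. Concretely, define
\begin{equation}
\alpha_k: C_k(C^\infty(M)) \longrightarrow \Omega^k(M), \qquad
\alpha_k(f_0 \otimes f_1 \otimes \cdots \otimes f_k) = \tfrac{1}{k!}\, f_0\, df_1 \wedge \cdots \wedge df_k,
\end{equation}
and in the opposite direction the antisymmetrization
\begin{equation}
\varepsilon_k(f_0\, df_1 \wedge \cdots \wedge df_k) = \sum_{\sigma \in S_k} \operatorname{sgn}(\sigma)\, f_0 \otimes f_{\sigma(1)} \otimes \cdots \otimes f_{\sigma(k)}.
\end{equation}
A direct calculation shows $\alpha_k \circ b = 0$ (the Leibniz rule collapses adjacent products) and $\alpha_k \circ \varepsilon_k = \mathrm{Id}$ on $\Omega^k(M)$. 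To see that $\alpha_k$ is also a right inverse up to $b$-boundaries, I would invoke the standard HKR argument in the smooth/continuous category: identify $C^\infty(M)^{\hat{\otimes}(k+1)} \cong C^\infty(M^{k+1})$ (completed projective tensor product), so a Hochschild chain is a smooth function on $M^{k+1}$, and construct an explicit contracting homotopy on the kernel of $\alpha_k$ via fiberwise retractions onto the diagonal of $M^{k+1}$, using a tubular neighbourhood and a smooth partition of unity.

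For part -ii), I would feed the result of -i) into the $I$-$S$-$B$ long exact sequence. A direct bookkeeping on chains shows that, under the identification $HH_k(C^\infty(M)) \cong \Omega^k(M)$, the Connes coboundary $B$ corresponds, up to a fixed combinatorial constant, to the de Rham differential $d$. The periodicity operator $S: H^\lambda_k \to H^\lambda_{k-2}$ then fits into
\begin{equation}
\cdots \longrightarrow \Omega^k(M) \stackrel{I}{\longrightarrow} H^\lambda_k(C^\infty(M)) \stackrel{S}{\longrightarrow} H^\lambda_{k-2}(C^\infty(M)) \stackrel{d}{\longrightarrow} \Omega^{k-1}(M) \longrightarrow \cdots,
\end{equation}
so the image of $I$ contributes $\Omega^k(M)/d\Omega^k(M)$ while the kernel of $d$ on $\Omega^{k-1}$, modulo exacts from $\Omega^{k-2}$, contributes $H^{k-1}_{dR}(M)$ at the next stage. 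Iterating the long exact sequence downward, and using that $H^\lambda_0 = HH_0 = C^\infty(M) = \Omega^0(M)$ and $H^\lambda_1 = \Omega^1(M)/d\Omega^0(M)$, gives the advertised direct sum decomposition by induction on $k$, with the parity $\varepsilon$ arising because $S$ decreases degree by $2$.

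The main obstacle I anticipate is not the algebra of the $I$-$S$-$B$ sequence but rather the analytic foundations underlying part -i): one must work consistently with projective tensor products, verify that $C^\infty(M)^{\hat{\otimes}(k+1)} \cong C^\infty(M^{k+1})$ (which relies on nuclearity of the Fr\'echet algebra $C^\infty(M)$), and then produce a genuinely continuous contracting homotopy on the kernel of $\alpha_k$. For a non-compact or merely paracompact $M$ this step requires care with supports and with the partition of unity used to localise to coordinate charts; globalising from local contractibility to a global homotopy is the delicate point, and is where I would expect to spend the bulk of the technical effort.
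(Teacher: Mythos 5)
Your proposal is correct and follows essentially the same route as the paper: the paper disposes of part -i) by citing Connes (\cite{Connes1}, Part II, Lemma 45, compact case) and Teleman \cite{Teleman1} (paracompact case), whose proofs are precisely the continuous Hochschild--Kostant--Rosenberg argument you sketch (projective tensor products, $C^\infty(M)^{\hat{\otimes}(k+1)} \cong C^\infty(M^{k+1})$ by nuclearity, and a homotopy built from a retraction onto the diagonal), and it obtains part -ii) exactly as you do, as a formal consequence of -i) together with Connes' $I$-$S$-$B$ exact sequence (Theorem 5) using that $B$ induces the de Rham differential on $HH_k \cong \Omega^k(M)$. You have correctly identified the genuinely technical point (the continuous contracting homotopy and its globalisation over a paracompact $M$), which is the content delegated to \cite{Teleman1}.
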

Part -i) of this theorem is due to Connes \cite{Connes1} Part II, Lemma 45, for $M$ compact, in the context of Hochschild \emph{co}-homology. For paracompact manifolds, the result is due to Teleman \cite{Teleman1}, in the context of Hochschild homology.
\par  
Part -ii) of the theorem is a formal consequence of -i) combined with Connes'  $(I.S.B)$-exact sequence, Theorem 5.

\subsection{Chern Character of Idempotents.}      %%%%%%%  Sect. 6.2

In this section we consider the Hochschild and cyclic homology of the arbitrary associative algebra with unit $\mathit{A}$ ( the ring $\mathit{L} = \mathbb{K}$ ), see the previous section.
\begin{proposition}     %%%%%%%   Proposition 12
Suppose $p \in \mathit{M}_{n}(\mathit{A})$ is an idempotent, $ p^{2} = p$.
Then for any even number $q$, the chain 
\begin{equation}   %%%%%% To verify coefficient
\Psi_{q} (p) :=   \frac{(2\pi i)^{q}q !}{(q/2) !}   p \otimes_{C} p \otimes_{C} , ... , \otimes_{C} p   \hspace{0.3cm}  ( q+1 \;\; factors)
\end{equation}
is a cyclic cycle with cyclic homology class 
\begin{equation}
[\Psi_{q}(p)] \in H^{\lambda}_{q} (\mathit{M}_{n} (\mathit{A})) \equiv 
H^{\lambda}_{q}  (\mathit{A}).
\end{equation}
\end{proposition}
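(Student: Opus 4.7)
The plan is to verify directly the two defining properties of a cyclic cycle on the underlying chain $p^{\otimes(q+1)}$ and then transport the class via Morita invariance; the scalar factor $(2\pi i)^{q} q!/(q/2)!$ in $\Psi_q(p)$ is a nonzero normalization that plays no role in either verification, so I set it aside. The $T$-invariance is immediate from the formula in the excerpt: since all tensor factors coincide,
\[
T(p \otimes p \otimes \cdots \otimes p) \;=\; (-1)^{q}\, p \otimes p \otimes \cdots \otimes p,
\]
which equals $p^{\otimes(q+1)}$ because $q$ is even; hence $\Psi_q(p)$ belongs to the cyclic subcomplex $C^{\lambda}_{q}(M_n(A))$.

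For the cycle condition I would apply the differential and repeatedly use $p^{2}=p$. The bar operator produces
\[
b'(p^{\otimes(q+1)}) \;=\; \sum_{r=0}^{q-1}(-1)^{r}\, p^{\otimes q} \;=\; \Bigl(\sum_{r=0}^{q-1}(-1)^{r}\Bigr) p^{\otimes q},
\]
and for $q$ even the alternating sum has $q$ terms that pair off to zero. In the paper's realization of the cyclic subcomplex as $T$-invariants of the bar complex, this already closes the argument. If instead one wishes to use the full Hochschild differential $b$, the extra cyclic term contributes $(-1)^{q}(p\cdot p)\otimes p \otimes \cdots \otimes p = p^{\otimes q}$; this residual is $T$-antisymmetric for $q$ even, since $T(p^{\otimes q})=(-1)^{q-1}p^{\otimes q}=-p^{\otimes q}$, so it equals $\tfrac{1}{2}(1-T)(p^{\otimes q})$ and vanishes when one passes to the cyclic complex (coinvariants).

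Finally, the identification $H^{\lambda}_{q}(M_n(A)) \cong H^{\lambda}_{q}(A)$ is the Morita invariance of cyclic homology (Theorem 6 in the excerpt), and the class $[\Psi_q(p)]$ is transported under the induced isomorphism. The only delicate point, more bookkeeping than an obstacle, is to keep the sign conventions for $T$ and for $b$ versus $b'$ aligned with the chosen realization of the cyclic subcomplex, and to recognize the residual term $p^{\otimes q}$ produced by the $b$-computation as an element of the image of $1-T$.
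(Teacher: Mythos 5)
Your proof is correct and takes essentially the same route as the paper: the paper states this proposition without a written proof, but its own verification of the directly analogous Theorem on $\mathbf{R}=\mathbf{P}-\mathbf{e}$ (Sect.\ 8.1) is precisely your computation --- $T$-invariance for $q$ even, then vanishing of $b'$ on the tensor power using $p^{2}=p$ and $\sum_{r=0}^{q-1}(-1)^{r}=0$ --- carried out in the paper's realization of the cyclic complex as the $T$-invariant subcomplex of the bar complex with differential $b'$. Your additional reconciliation with the quotient-complex convention (the residual $p^{\otimes q}$ lying in $\mathrm{im}(1-T)$) and the Morita-invariance identification $H^{\lambda}_{q}(\mathit{M}_{n}(\mathit{A}))\cong H^{\lambda}_{q}(\mathit{A})$ are likewise sound and consistent with the paper's conventions.
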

\begin{definition}
The system of cyclic homology classes 
\begin{equation}
Ch (p) := \{  [\Psi_{q}(p)]  \}_{q = ev}
\end{equation}
 is called the Chern character of the idempotent $p$. 
\end{definition}

\subsection{Pairing of Cyclic Homology of Algebras of Operators with Alexander-Spanier Co-homology}    %%   Sect. 6.3
\par 
Let $Op$ denote some algebra of bounded operators in the Hilbert space $H$ of $L_{2}$ sections in a vector bundle $\xi$ over the smooth compact manifold $M$. In our applications, $Op$ will be one Schatten ideal of operators on $H$.
\begin{definition}  %%%%%%%%%  Definition 14
Let
\begin{equation}
\phi_{k} = \sum A_{0} \otimes_{\mathbb{C}} A_{1}\otimes_{\mathbb{C}} , ...,\otimes_{\mathbb{C}} A_{k} \in C^{\lambda}_{k} (Op)
\end{equation}
and 
\begin{equation}
\eta^{k} = \sum f_{0}  \otimes_{\mathbb{C}}  f_{1} \otimes_{\mathbb{C}} , ..., \otimes_{\mathbb{C}}  f_{k} , \hspace{0.3cm}
f_{i} \in C^{\infty} (M),
\end{equation}
be convergent series in the projective tensor product spaces. One  assumes also that in each monomial at least one of the  factors $A_{i}$ is trace class.
\par
The pairing $\phi \;\cap \; \eta$, called \emph{cap product} is defined  on chains by the convergent double series
\begin{equation}
\phi_{k} \;\cap\; \eta^{k}  := \sum \sum Tr (  A_{0} f_{0}  A_{1} f_{1} , ... ,  A_{k} f_{k}  ) \in \mathbb{C},           
\end{equation}
where $Tr$ denotes operator trace.
\end{definition}
\par
In this definition both the cyclic homology class and the Alexander-Spanier co-homology class should have the same degree. 
For this purpose, we introduce 
\begin{equation}
H^{\lambda}_{ev} (Op) \;:= \; \sum_{q=even} H^{\lambda}_{q} (Op)
\end{equation}
and
 \begin{equation}
 H^{ev}_{AS} (M) \;=\; \sum _{q=even} H^{q}_{AS} (M)
 \end{equation}
 \par
 \begin{definition}   %%%%%%   Definition 15
 \begin{equation}
 H^{\lambda}_{ev} (Op) \; \Diamond \; H^{ev}_{AS} (M) \;=\; \{ \; \sum \phi_{q} \;\otimes \; \eta^{q}   \;  | \; 
  \{  \phi_{q} \;\otimes \; \eta^{q}   \}_{q=0, 2,4, ...} \in    
 H^{\lambda}_{ev} (Op)  \otimes H^{ev}_{AS} (M)  \} 
 \end{equation}
 \end{definition} 
 \par
 It is important to notice that the outcome of the  $\Diamond$ product consists of systems of tensor products of homology by co-homology classes rather than scalars.  In addition, one has the relation
 \[
Tr (  \phi \; \Diamond \; \eta   ) \; = \; \phi \; \cap \; \eta. 
\] 
\par
\begin{lemma} (Connes-Moscovici \cite{Connes-Moscovici1}, Lemma 2.1 (ii))   %%%%  Lemma 16
\par
Under the hypotheses of the above lemma, one has
\begin{equation}
b'(\phi_{k+1}) \;\cap\; \eta^{k}  :=  \phi_{k+1} \;\cap\; (\delta \eta^{k}), 
\end{equation} 
where $\eta$ is anti-symmetric Alexander-Spanier $k$-cochain  and $\delta$ denotes the Alexander-Spanier co-boundary.
\end{lemma}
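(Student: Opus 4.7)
The plan is to carry out a direct expansion of both sides using the definitions of $b'$, the cap product, and the Alexander-Spanier coboundary $\delta$, and then match the resulting traces term by term, using the cyclicity of the operator trace to handle the wrap-around contributions.

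First I would unfold the right-hand side. Writing a simple tensor $\eta^{k}=f_{0}\otimes f_{1}\otimes\cdots\otimes f_{k}$ as the function $(x_{0},\dots,x_{k})\mapsto f_{0}(x_{0})\cdots f_{k}(x_{k})$, the Alexander-Spanier coboundary $\delta\eta^{k}$ is represented at the tensor level by inserting the unit $1\in C^{\infty}(M)$ at each slot:
\[
\delta\eta^{k}=\sum_{j=0}^{k+1}(-1)^{j}\,f_{0}\otimes\cdots\otimes f_{j-1}\otimes 1\otimes f_{j}\otimes\cdots\otimes f_{k}.
\]
Capping this with $\phi_{k+1}=A_{0}\otimes\cdots\otimes A_{k+1}$ and using the definition of the cap product yields a sum of $k+2$ traces, in each of which the insertion of $1$ between $A_{j}$ and $A_{j+1}$ leaves the product $A_{j}A_{j+1}$ at the $j$-th position.

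Next I would unfold the left-hand side. By the formula for $b'_{k+1}$ the chain $b'(\phi_{k+1})$ is a sum of $k+1$ terms, indexed by $r=0,\dots,k$, in which $A_{r}A_{r+1}$ appears fused at slot $r$; capping with $\eta^{k}$ produces exactly the same traces as the $j=0,\dots,k$ terms on the right-hand side, with matching signs $(-1)^{r}=(-1)^{j}$. The only remaining piece is the extra $j=k+1$ term of the right-hand side,
\[
(-1)^{k+1}\,\mathrm{Tr}\bigl(A_{0}f_{0}A_{1}f_{1}\cdots A_{k}f_{k}A_{k+1}\bigr),
\]
which by the cyclicity of the trace equals $(-1)^{k+1}\,\mathrm{Tr}(A_{k+1}A_{0}f_{0}A_{1}f_{1}\cdots A_{k}f_{k})$; this is precisely the wrap-around contribution that distinguishes $b$ from $b'$, and it must be absorbed using the hypothesis on $\eta^{k}$ (antisymmetry) together with the trace-class assumption that makes the cyclic permutation under $\mathrm{Tr}$ legitimate.

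The main obstacle is exactly this last identification: showing that antisymmetry of the Alexander-Spanier cochain $\eta^{k}$ forces the wrap-around trace to reassemble into a term already accounted for on the left, or, equivalently, that the defect between $b$ and $b'$ pairs trivially against antisymmetric cochains. Concretely, I would antisymmetrize the trace $\mathrm{Tr}(A_{k+1}A_{0}f_{0}\cdots A_{k}f_{k})$ over the cyclic rotation of the $f_{i}$'s induced by trace cyclicity; since $\eta^{k}$ is antisymmetric while the operator-cyclic permutation has a symmetric character after the rotation of variables, the extra term vanishes. Once that cancellation is established, the term-by-term matching completed in the first two steps closes the proof, and convergence of the double series is inherited from the projective tensor product hypothesis together with the fact that at least one factor in each monomial is trace class.
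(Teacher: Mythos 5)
The paper offers no proof of this lemma --- it is quoted from Connes--Moscovici, Lemma~2.1(ii) --- so your argument has to stand on its own, and its first two steps do: with Definition~14 of the paper, representing $\delta\eta^{k}$ at the tensor level by inserting the unit $1$ in each slot is correct, the faces $j=0,\dots,k$ match the $b'$-terms sign for sign, and the whole statement reduces to the wrap-around face $(-1)^{k+1}\mathrm{Tr}(A_{0}f_{0}\cdots A_{k}f_{k}A_{k+1})$. The gap is your final step: this term does \emph{not} vanish, by antisymmetry of $\eta^{k}$ or otherwise. By trace cyclicity it equals $(-1)^{k+1}\mathrm{Tr}\bigl((A_{k+1}A_{0})f_{0}A_{1}f_{1}\cdots A_{k}f_{k}\bigr)$, which is exactly the face converting $b'$ into the full Hochschild boundary $b$; so what your computation actually proves is the clean identity
\begin{equation}
\phi_{k+1}\cap\delta\eta^{k}\;=\;b(\phi_{k+1})\cap\eta^{k},
\end{equation}
valid for \emph{every} cochain $\eta^{k}$, with antisymmetry playing no role at all. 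A concrete counterexample to your cancellation claim, already at $k=0$ (where antisymmetry is vacuous): for the cyclic chain $\phi_{1}=A\otimes B-B\otimes A$ and $\eta^{0}=f$ one computes $b'(\phi_{1})\cap f=\mathrm{Tr}([A,B]f)$ while $\phi_{1}\cap\delta f=2\,\mathrm{Tr}([A,B]f)$; for the non-cyclic $\phi_{1}=A\otimes B$ the two sides differ by $\mathrm{Tr}(BAf)\neq 0$. Your appeal to the "symmetric character" of the operator-cyclic permutation against the antisymmetric cochain is precisely the point that fails.

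What is missing is the one hypothesis you never invoke: $\phi_{k+1}\in C^{\lambda}_{k+1}(Op)$, i.e.\ $T\phi_{k+1}=\phi_{k+1}$. Antisymmetry of $\eta$ enters not to kill the extra trace but to make the cap product $T$-invariant on the chain side: $T\psi\cap\eta=\psi\cap\eta$, because trace cyclicity rotates the functions $f_{i}$ by one step, and the sign $(-1)^{k}$ that antisymmetry assigns to this rotation of the arguments cancels the sign $(-1)^{k}$ in the definition of $T$. Combining this $T$-invariance with the standard operator identity $b'N=Nb$ (where $N=1+T+\cdots+T^{k+1}$ is the cyclic norm), applied to the $T$-invariant chain $\phi_{k+1}$, yields
\begin{equation}
(k+2)\;b'(\phi_{k+1})\cap\eta^{k}\;=\;(k+1)\;b(\phi_{k+1})\cap\eta^{k}\;=\;(k+1)\;\phi_{k+1}\cap\delta\eta^{k},
\end{equation}
so the stated identity holds only up to the normalization $(k+1)/(k+2)$ --- a factor absorbed in the Connes--Moscovici conventions and harmless for the paper's application, since Theorem~19 gives $b'(\otimes^{2q+1}\mathbf{R})=0$ and one only needs the implication $b'\phi=0\Rightarrow\phi\cap\delta\eta=0$. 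To repair your proof, keep your expansion verbatim, but replace the false vanishing claim by this $T$-invariance argument (using cyclicity of $\phi_{k+1}$), or else prove and use the lemma in its exact $b$-form above.
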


\begin{corollary} 
The pairing $\cap$ passes to homology
\begin{equation}
 \cap: H^{\lambda}_{k} (Op) \otimes_{\mathbb{C}} H^{k}_{AS} (M) \longrightarrow \mathbb{C}
\end{equation}
\end{corollary}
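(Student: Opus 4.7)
The goal is to show that the chain-level cap product $\cap$ descends to a well-defined pairing on homology classes. The preceding lemma already supplies the crucial chain-level adjointness $b'(\phi_{k+1})\cap\eta^{k}=\phi_{k+1}\cap\delta\eta^{k}$ for antisymmetric Alexander-Spanier cochains $\eta^{k}$, so the task reduces to first extending this adjointness from the bar boundary $b'$ to the Hochschild/cyclic boundary $b$ on the cyclic subcomplex $C^{\lambda}$, and then running the standard two-line descent argument.

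For the first step, I would write $b=b'+(-1)^{k+1}t$ on $(k+1)$-chains, where $t(A_{0}\otimes\cdots\otimes A_{k+1})=(A_{k+1}A_{0})\otimes A_{1}\otimes\cdots\otimes A_{k}$ is the wrap-around term. Cyclicity of the operator trace gives
\[
t(\phi_{k+1})\cap\eta^{k}=Tr\bigl[A_{0}f_{0}A_{1}f_{1}\cdots A_{k}f_{k}A_{k+1}\bigr],
\]
which I would then compare term-by-term with the ``tail'' portion of $\phi_{k+1}\cap\delta\eta^{k}$ (the terms where $1$ is inserted in the final slot of the Alexander-Spanier coboundary), using the antisymmetry of $\eta^{k}$ together with the cyclic invariance of $\phi_{k+1}\in C^{\lambda}$ to match signs and indices. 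The outcome is that the lemma's identity continues to hold with $b$ in place of $b'$ when both sides are restricted to the cyclic subcomplex paired against antisymmetric cochains.

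The descent of the pairing then follows immediately. If $\phi\in C^{\lambda}_{k}(Op)$ is a cyclic cycle and $\omega\in C^{k-1}(M)$ is antisymmetric, then $\phi\cap\delta\omega=b\phi\cap\omega=0$; symmetrically, if $\phi=b\psi$ is a cyclic boundary and $\eta\in C^{k}(M)$ is an Alexander-Spanier cocycle, then $b\psi\cap\eta=\psi\cap\delta\eta=0$. Hence the bilinear map factors through the tensor product of classes, yielding the claimed pairing $H^{\lambda}_{k}(Op)\otimes_{\mathbb{C}}H^{k}_{AS}(M)\to\mathbb{C}$.

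The main obstacle I anticipate is in the first step: correctly reconciling the bar boundary with the Hochschild boundary through the interplay of cyclicity of the trace, cyclic invariance of elements of $C^{\lambda}$, and antisymmetry of the Alexander-Spanier cochain, where careful combinatorial sign-tracking is essential. A secondary technical issue is the absolute convergence of the double series defining $\cap$ under the rearrangements required by these manipulations, which is underwritten by the trace-class hypothesis on at least one factor per monomial already invoked in the preceding lemma.
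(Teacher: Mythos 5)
Your descent skeleton is exactly the paper's (unwritten) proof: the Corollary is stated as an immediate consequence of Lemma 16, with cycles paired against coboundaries and boundaries paired against cocycles both vanishing by the adjointness of the boundary operator with $\delta$. Where you diverge is your preliminary step upgrading the identity from $b'$ to $b$, and this step is both unnecessary and structurally misplaced. In this paper $C^{\lambda}_{k}$ is defined as the $T$-invariant subcomplex of the \emph{bar} complex, so the differential computing $H^{\lambda}_{k}(Op)$ is $b'$ itself, and Lemma 16 applies verbatim: if $b'\phi=0$ then $\phi\cap\delta\omega=b'\phi\cap\omega=0$, and if $\phi=b'\psi$ with $\psi$ cyclic and $\delta\eta=0$ then $\phi\cap\eta=\psi\cap\delta\eta=0$. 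No passage to $b$ is required, so your ``main obstacle'' dissolves in the paper's conventions.

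The structural issue with your first step is that $b$ does \emph{not} preserve the $T$-invariant subcomplex: for cyclic $\phi$ one has $b\phi=b'\phi+d_{0}\phi$, where $d_{0}$ is the first face map, and $d_{0}\phi$ need not be cyclic; so the object $(C^{\lambda},b)$ on which you run your descent is not a complex. The model of cyclic homology that carries the Hochschild boundary $b$ is the quotient $C_{*}/\mathrm{im}(1-T)$, the invariant model carrying $b'$ (the two are identified rationally by the norm map $N$, via $Nb=b'N$). Your wrap-around computation itself is sound — cyclicity of the trace does give the chain-level identity $\phi_{k+1}\cap\delta\eta^{k}=b\phi_{k+1}\cap\eta^{k}$, and in fact this is the more honest form of the lemma, since the face $d_{k+1}$ cannot be dropped for free — but if you work in the quotient model where $b$ lives, you must additionally verify that the pairing kills $\mathrm{im}(1-T)$, i.e., that $T\psi\cap\eta=\psi\cap\eta$ for antisymmetric $\eta$ (true, by trace cyclicity together with the sign $(-1)^{k}$ an antisymmetric cochain acquires under cyclic rotation of its variables, but absent from your writeup; without it the pairing is not well defined on classes in that model). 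So your argument is repairable in one line in either model, and your convergence remark is fine, but as written it computes with the wrong boundary on $C^{\lambda}$, and the extra reconciliation step buys nothing for the formulation actually used in the paper.
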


\begin{corollary}
The cap product $\cap$ being linear in each factor, if one fixes one of the factors and make variable the other, one 
obtains an element of its dual. In particular, if $\phi_{k} $ is a cyclic \emph{co-cycle}, then the correspondence
\begin{equation}
\eta^{k}  \to \phi_{k} \;\cap\;  \eta^{k}  
\end{equation}
induces a mapping
\begin{equation}
      H_{AS}^{k} (M) \to \mathbb{C}
\end{equation}
and therefore it defines an Alexander-Spanier \emph{homology} class on $M$.
\end{corollary}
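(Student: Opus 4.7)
The plan is to deduce this statement directly from the preceding Corollary 17 together with the elementary duality between Alexander-Spanier cohomology and Alexander-Spanier homology over $\mathbb{C}$.

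First, I would observe that the bilinearity of $\cap$ is built into its definition, so fixing one factor indeed produces a linear functional on the other. The content of the corollary lies in the second assertion: for a cyclic cocycle $\phi_{k}$, the resulting functional on $C^{k}_{AS}(M)$ descends to a functional on cohomology. But this descent is precisely what Corollary 17 delivers, asserting that the cap product passes to a pairing
\[
\cap : H^{\lambda}_{k}(Op) \otimes_{\mathbb{C}} H^{k}_{AS}(M) \longrightarrow \mathbb{C}.
\]
Restricting to the slice determined by the fixed class $[\phi_{k}] \in H^{\lambda}_{k}(Op)$, I obtain the desired linear map $\ell_{\phi_{k}} : H^{k}_{AS}(M) \to \mathbb{C}$, $[\eta^{k}] \mapsto \phi_{k} \cap \eta^{k}$.

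Second, I would interpret $\ell_{\phi_{k}}$ as an Alexander-Spanier homology class. Since $M$ is a compact smooth manifold, $H^{k}_{AS}(M)$ is finite-dimensional over $\mathbb{C}$, and the standard duality (or the universal coefficient theorem over a field) supplies the natural identification
\[
\mathrm{Hom}_{\mathbb{C}}\bigl(H^{k}_{AS}(M),\; \mathbb{C}\bigr) \;\cong\; H^{AS}_{k}(M;\mathbb{C}),
\]
which is in fact how one may take the definition of Alexander-Spanier homology: as the homology of the complex dual to the AS cochain complex. Under this isomorphism, $\ell_{\phi_{k}}$ corresponds to an AS homology class of degree $k$, as asserted.

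The only technical point worth flagging is that the real work -- reconciling the bar operator $b'$ that appears in Lemma 16 with the Hochschild operator $b$ defining cyclic cycles, and using the skew-symmetry of the AS cochains to absorb the cyclic permutation that accounts for the difference -- has already been carried out in establishing Corollary 17. Once that input is available, the present corollary is a purely formal duality statement; the main obstacle was really upstream, in verifying that the cap product is compatible with the two boundary operators $b$ and $\delta$.
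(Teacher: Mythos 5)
Your proposal is correct and follows essentially the same route the paper takes: the paper states this corollary without proof, treating it as an immediate formal consequence of Corollary 17 (the descent of $\cap$ to homology, where the $b'$-versus-$b$ and skew-symmetry issues were already absorbed) together with the identification, over the field $\mathbb{C}$, of $\mathrm{Hom}_{\mathbb{C}}(H^{k}_{AS}(M),\mathbb{C})$ with Alexander-Spanier homology. Your write-up simply makes explicit the duality step the paper leaves tacit, consistent with how the paper itself treats functionals such as $\tau^{q}_{\mathbf{a}}$ as defining homology classes.
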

\par
%%%%%%%%%%%%%%%%   Sect. 7 
\section{\emph{Local} Cyclic Homology and \emph{Local} $K$-Theory of Schatten Ideals.}
\begin{remark}
It is clear that none of the Schatten ideals $\mathcal{L}^{p}$ of compact operators in the Hilbert space of $L_{2}$ sections in the fibre bundle $\xi$ over the manifold $M$ carries any kind of information about the space $M$ because all such Schatten ideals are \emph{independent} of the space $M$. To restore information about the base space $M$, it would be necessary to take into account the module structure of these spaces over some algebra of functions over $M$.
A partial solution to this problem is to consider the distributional kernel of such operators and keep track of their \emph{supports}, as the Alexander-Spanier co-homology does.
\end{remark}
\par
The leading idea of this paper is to introduce, in analogy with the Alexander-Spanier co-homology, the
\emph{local} $K$-theory ( $K^{i, loc}(A)$ ), \emph{local} Hochschild homology and \emph{local} cyclic homology ($HH_{\ast}^{loc}$, $H_{\ast}^{\lambda, loc}(A)$), etc. We believe that these new structures fit more naturally (than the classical counter-parts) in many interesting problems which involve Alexander-Spanier homology and co-homology, including the local index theorem. 
\par
The basic ideas of this program were announced at the ''Trieste - 2007 Workshop on Noncommutative Geometry'', October 2007.
\par
We stress that our \emph{local} cyclic homology definitely differs from the local cyclic homology introduced by Puschnigg \cite{Puschnigg}.   Here, the word  \emph{local} refers to the supports of the operators involved. 
\par
The first algebras of interest onto which we intend to apply our \emph{local} structures  are the Schatten ideals $\mathcal{L}^{p}$,  $1 \leq p \leq \infty$. It is well known and it is not surprising that the $K$-theory and cyclic homology of Banach algebras and, in particular, of these algebras are trivial or do not describe well the topology of the spaces onto which they are defined, see \cite{Connes2} , \cite{Connes-Moscovici1} Sect. 4, pg.371,    \cite{Cuntz} Corollary 3.6 . 
\par
Our arguments are independent of the notion of \emph{entire} cyclic co-homology or \emph{asymptotic} co-homology, due to Connes (see \cite{Connes2} for more information)  and Connes-Moscovici \cite{Connes-Moscovici1}.   
\par
To proceed, for any element $A$ belonging to the Schatten ideal $\mathcal{L}^{p}$ of operators on the Hilbert space of $L_{2}$ sections in a complex vector bundle over the compact smooth manifold $M$ we consider its support $Supp(A) \subset M\times M$ given by the support of its distributional kernel. 
\par
The support  filtration of $Supp (A) \subset M \times M$ gives a filtration of the elements of the Schatten ideal. 
\par
The support-filtration on each of the factors of the  $\ \gamma = \sum A_{0} \otimes_{\mathbb{C}} A_{1} \otimes_{\mathbb{C}} 
,..., \otimes_{\mathbb{C}} A_{k} \in  C^{\lambda}_{k} (\mathcal{L}^{p})$ gives a filtration in $C^{\lambda}_{k} (\mathcal{L}^{p})$ in the same manner in which Alexander-Spanier co-homology is defined. To be more specific, 
we will say that $\gamma$ has support in $\mathit{U} \subset M \times M$ iff $Supp (A_{i}) \subset   \mathit{U} $ for any $i$. 
\par
By definition, $C^{\lambda, \mathit{U}}_{k} (\mathcal{L}^{p})$ consists of those chains $\gamma$ whose supports lay in  $\mathit{U}$.
\par
By analogy with the definition of the Alexander-Spanier co-homology,
\begin{equation}
H^{\lambda, loc}_{\ast} (\mathcal{L}^{p})
\end{equation}  
is the homology of the sub-complex $C^{\lambda, \mathit{U}}_{\ast} (\mathcal{L}^{p})$ consisting of those chains $\gamma$ whose supports are contained in the subset $\mathit{U} \subset M \times M$, with $\mathit{U}$ sufficiently small.
\par
The $K$-theory groups $K^{loc, i} (\mathcal{L}^{p})$ are defined analogously.
\par
These groups will be called \emph{local} cyclic homology, resp. \emph{local} $K$-theory. 
\par
An extension of these constructions to more general Banach algebras will be discussed elsewhere.
\par
We will show in Sect. 9, Proposition 25 and Propsition 26, that the \emph{local} cyclic homology and the real \emph{local} $K$-theory of the algebra of smoothing operators is at least as big as the de Rham cohomology and ordinary real $K$-theory, respectively.  
%%%%%%%%%%%   End  Sect. 7
%%%%%%%%%%%%%%%%%%%%%%%%%%%%%%%%%%%%%%%%%%%%%
\section{Applications of the \emph{Local} Cyclic Homology }   %%%%%% Sect 8
\par
In this section we show that the Connes-Moscovici local index Theorem 1 may be reformulated by using 
our notions of \emph{local} structures introduced in Sect. 7. 
\par
In  Sect. 4  we summarised the Connes-Moscovici construction of the index class. It is obtained as the result of the composition of two constructions
\begin{equation}
K^{1}(C(S^{\ast}M)) \stackrel{\mathbf{R}}{\longrightarrow}  K^{0} ({\mathcal{K}}_{M}) \stackrel{\tau}{\longrightarrow}  H_{ev}^{AS} (M),
\end{equation}
where $K^{0} ({\mathcal{K}}_{M}) $ consists of differences of stably homotopy classes of smooth idempotents with \emph{arbitrary} supports.
\par
In \cite{Connes-Moscovici1}, Pg. 352 it is clearly stated that the connecting homomorphism $\partial_{1}$ takes values in $ K^{0} ({\mathcal{K}}_{M}) \simeq \mathbb{Z}$ and that the information carried by it is solely the index of the operator. However, \cite{Connes-Moscovici1} states also that by pairing the residue operator $\mathbf{R}$ with the Alexander-Spanier cohomology (which is \emph{local}) one recovers the whole co-homological information carried by the symbol, as stated by Theorem 1 of Sect. 4. 
\par
Let $\mathit{S}$ denote the unitarized algebra of smoothing operators on the compact smooth manifold $M$
\begin{equation}
\mathit{S} := \mathbb{C}.1 + \mathcal{L}^{\infty}(M).
\end{equation}
All results of this section will remain valid if we replace smoothing operators by trace class operators.
\par
The leading ideas of the paper are: -1)  to consider $\mathbf{R}$ not only as element of $K^{0} (\mathit{S})$, but rather as an element of $K^{0, loc} (\mathit{S})$, -2)  show that $\mathit{R}$ has a genuine Chern character belonging to the \emph{local} cyclic homology $H^{\lambda, loc, ev}_{\Lambda} (\mathit{M}_{2} (\mathit{S}))$  over the separable ring
$\Lambda := \mathbb{C} + \mathbb{C} e $, where $e = \mathbf{P}_{2}$ is the trivial idempotent involved in the construction of
the residue operator $\mathit{R}$, see formula (20) and Lemma 8, Sect. 6.1.
%%%%%%%%%%%%%%%%%%%%%%%%%%%%%%%%%%  Subsect. 8.1

\subsection{\emph{Local} Chern Character of the Residue Operator $\mathbf{R}$}
 Let the ring $\mathit{L}$  from Section  5.1. be given by 
\begin{equation}
\Lambda = \mathbb{C}  +  \mathbb{C} e 
\end{equation}
\par
We assume that $\mathcal{U}$ is quasi-stable under products and stable under products by elements of $ \Lambda$;  (here, by quasi-stable we intend a property analogous to that refering to products of compact operators). This is the space of operators with \emph{small} supports. We apply next the considerations made in Sect. 5. to produce \emph{local} cyclic homology $H^{\lambda, loc}_{\Lambda, \ast} (\mathit{S})$; here, $\mathit{S}) := \mathcal{S} + \mathbb{C}.1$.
\par
\begin{remark}
Let $\mathbf{R} =  \mathbf{P}   -  \mathbf{e}$, where $\mathbf{P}$ and  $ \mathbf{e}$ are idempotents.
Then $\mathbf{R}$  satisfies the identity
\begin{equation}
\mathbf{R}^{2} =  \mathbf{R} - (\mathbf{e} \mathbf{R} + \mathbf{R} \mathbf{e}).            
\end{equation}
Notice that if $R$ were an idempotent, the  term $\mathbf{e} \mathbf{R} + \mathbf{R} \mathbf{e}$ would be absent.
\end{remark}

\begin{theorem}
Let  $\mathbf{P}, \mathbf{e} $  be idempotents in   $\mathcal{U}$ and $ \mathbf{R}= \mathbf{P} - \mathbf{e}$.
\par
Then, for any even number $q$
\begin{equation}
\tau_{q}(\mathbf{R}) := R\otimes_{ \Lambda}\mathbf{R} \otimes_{ \Lambda} ..... \otimes_{ \Lambda} \mathbf{R} \; \otimes_{ \Lambda} \in C^{\Lambda}_{q}(\mathcal{U})
\end{equation}
is a local cyclic cycle of the algebra $\mathit{S}$ over the ring $\Lambda$.
\end{theorem}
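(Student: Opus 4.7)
The claim asks for three properties of $\tau_q(\mathbf{R})$: locality, cyclic invariance, and the cycle condition $b\tau_q=0$ in the cyclic complex over $\Lambda$. The first two are immediate. Locality follows from property 1) of Sect.~4: the smoothing operator $\mathbf{R}$ has distributional support in an arbitrarily small neighbourhood of the diagonal in $M\times M$, and this is inherited by every tensor factor of $\tau_q$, so $\tau_q\in C^{\lambda,loc}_{\Lambda,q}$. Cyclic invariance is trivial, since the cyclic permutation $T$ of Sect.~5.1 carries sign $(-1)^q=+1$ for $q$ even and all $q+1$ tensor factors coincide with $\mathbf{R}$.

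The substance is the Hochschild boundary. I expand
\begin{equation*}
b\,\tau_q \;=\; \sum_{r=0}^{q-1}(-1)^r\,\mathbf{R}^{\otimes r}\otimes_\Lambda \mathbf{R}^{2}\otimes_\Lambda \mathbf{R}^{\otimes(q-1-r)} \;+\;(-1)^q\,\mathbf{R}^{2}\otimes_\Lambda \mathbf{R}^{\otimes(q-1)},
\end{equation*}
and substitute the key identity $\mathbf{R}^{2}=\mathbf{R}-(\mathbf{e}\mathbf{R}+\mathbf{R}\mathbf{e})$ from Remark~22. The pure-$\mathbf{R}$ contributions collect with total coefficient $\sum_{r=0}^{q-1}(-1)^r+(-1)^q=1$ (for $q$ even), producing the single chain $\mathbf{R}^{\otimes q}$. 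For the correction terms the decisive point is that $\mathbf{e}\in\Lambda$: the inter-tensor relation $X\mathbf{e}\otimes_\Lambda Y=X\otimes_\Lambda \mathbf{e}Y$ and the circularity $f_0\otimes_\Lambda\cdots\otimes_\Lambda f_k\cdot\mathbf{e}=\mathbf{e}\cdot f_0\otimes_\Lambda\cdots\otimes_\Lambda f_k$ together identify the chain with $\mathbf{R}\mathbf{e}$ at slot $r$ with the chain with $\mathbf{e}\mathbf{R}$ at slot $r+1\bmod q$. Under these identifications the alternating Hochschild signs telescope: every slot strictly interior to the cyclic seam receives equal and opposite contributions from the "$\mathbf{e}\mathbf{R}$" side and the slid "$\mathbf{R}\mathbf{e}$" side, leaving only a residue at slots $0$ and $1$.

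After this bookkeeping one obtains $b\tau_q = \mathbf{R}^{\otimes q} - T_0 - T_1$ in $C_{\Lambda,q-1}$, where $T_s$ denotes the chain with $\mathbf{e}\mathbf{R}$ in slot $s$ and $\mathbf{R}$ elsewhere. To conclude I pass to the cyclic quotient $C^{\lambda}_{\Lambda,q-1}=C_{\Lambda,q-1}/(1-T)$. In degree $q-1$ the cyclic sign is $(-1)^{q-1}=-1$, so $T(\mathbf{R}^{\otimes q})=-\mathbf{R}^{\otimes q}$, and a direct shift computation gives $T(T_1)=-T_0$; hence $\mathbf{R}^{\otimes q}$ and $T_0+T_1$ both lie in the image of $(1-T)$ and vanish in the cyclic quotient over $\mathbb{C}$. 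Therefore $b\tau_q=0$ in the cyclic complex, completing the proof. The only delicate step is the combinatorial bookkeeping of the second paragraph — tracking signs through the Hochschild boundary and the $\Lambda$-slides and verifying that all interior cancellations occur so that only the anti-cyclic residues $\mathbf{R}^{\otimes q}$ and $T_0+T_1$ remain — once this is carried out, the cyclic vanishing is immediate.
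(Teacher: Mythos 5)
Your proof is correct, and its two key mechanisms --- substituting $\mathbf{R}^{2}=\mathbf{R}-(\mathbf{e}\mathbf{R}+\mathbf{R}\mathbf{e})$ and sliding $\mathbf{e}\in\Lambda$ across $\otimes_{\Lambda}$, including around the circular seam --- are exactly the paper's; but you run the endgame in a different model of the cyclic complex. The paper computes with the bar operator $b'$, the differential it attaches to its cyclic subcomplex of $T$-invariant chains in Sect.~5.1: there is then no wrap-around face, the pure-$\mathbf{R}$ terms cancel outright because $\sum_{r=0}^{q-1}(-1)^{r}=0$ for $q$ even, and the $\mathbf{e}$-slides telescope to $-\bigl(1+(-1)^{q-1}\bigr)T_{0}=0$ after the circular identification of $\mathbf{R}\mathbf{e}$ in the last slot with $\mathbf{e}\mathbf{R}$ in slot $0$; so $b'\tau_{q}=0$ holds exactly, at the chain level, with nothing left to quotient away. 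You instead take the full Hochschild boundary $b$, whose extra face $(-1)^{q}d_{q}$ is precisely what produces your residue $b\tau_{q}=\mathbf{R}^{\otimes q}-T_{0}-T_{1}$ (I verified this bookkeeping, and your identities $T(\mathbf{R}^{\otimes q})=-\mathbf{R}^{\otimes q}$ and $(1-T)(T_{1})=T_{0}+T_{1}$ do annihilate it in the quotient), so you obtain a cycle in Connes' model $C^{\lambda}=C/(1-T)$ rather than an exactly $b'$-closed invariant chain. Since $\tau_{q}$ is $T$-invariant and the ground field has characteristic zero, the two models are identified by the norm and projection maps, so the resulting homology class is the same. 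What each buys: your formulation is the more standard presentation of cyclic homology and does not rely on the invariant-subcomplex convention; the paper's yields chain-level $b'$-closedness, which is exactly the hypothesis consumed by the pairing of Lemma~16 ($b'(\phi)\cap\eta=\phi\cap\delta\eta$) in Sect.~8.2 --- in your quotient model one needs the small supplementary check that chains in $\mathrm{im}(1-T)$ pair to zero against antisymmetric Alexander--Spanier cochains (true, by the trace property together with antisymmetry, but a step the paper's route avoids).
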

\par
\noindent
\begin{proof}
It is clear that $\tau_{q}(\mathbf{R})$ is cyclic. 
\par
To simplify the notation, limited to this proof, we write $ \otimes = \otimes_{ \Lambda}$ and we omit
the last tensor product. We have

\begin{equation}
b'(\mathbf{R} \otimes \mathbf{R} \otimes ..... \otimes \mathbf{R} ) = 
\sum_{r=0}^{r=q-1}
(-1)^{r} \mathbf{R} \otimes \mathbf{R} \otimes ...\otimes \mathbf{R}^{2} \otimes ... \otimes
\mathbf{R}
\end{equation}
%%%%%%%%%%%%%%%%%%
\begin{equation}
= \sum_{r=0}^{r=q-1} (-1)^{r} \mathbf{R} \otimes ...\otimes \mathbf{R} \otimes 
[\mathbf{R} - (\mathbf{e} \mathbf{R} + \mathbf{R} \mathbf{e})]
\otimes \mathbf{R} \otimes... \otimes \mathbf{R}  
\end{equation}
%%%%%%%%%%%%%%%%%%
\begin{equation}
= \sum_{r=0}^{r=q-1} (-1)^{r} \mathbf{R} \otimes ...\otimes \mathbf{R} \otimes 
\mathbf{R}
\otimes \mathbf{R} \otimes... \otimes \mathbf{R}  
\end{equation}
\begin{equation}
- \sum_{r=0}^{r=q-1} (-1)^{r} \mathbf{R} \otimes ...\otimes \mathbf{R} \otimes 
(\mathbf{e} \mathbf{R} + \mathbf{R} \mathbf{e})
\otimes \mathbf{R} \otimes... \otimes \mathbf{R}  
\end{equation}
%%%%%%%%%%%%%%%%%%
\begin{equation}
= - \sum_{r=0}^{r=q-1} (-1)^{r} \mathbf{R} \otimes ...\otimes \mathbf{R} \otimes 
(\mathbf{e} \mathbf{R} + \mathbf{R} \mathbf{e})
\otimes \mathbf{R} \otimes... \otimes \mathbf{R}  
\end{equation}
%%%%%%%%%%%%%%%%%%
\begin{equation}
= - \{ 
\sum_{r=0}^{r=q-1} (-1)^{r} \mathbf{R} \otimes ...\otimes \mathbf{R} \otimes 
\mathbf{e} \mathbf{R} 
\otimes \mathbf{R} \otimes... \otimes \mathbf{R} 
\end{equation}
\begin{equation}
 \hspace{0.3cm}+ \hspace{0.4cm} \sum_{r=0}^{r=q-1} (-1)^{r} \mathbf{R} \otimes ...\otimes \mathbf{R} \otimes 
 \mathbf{R} 
\otimes \mathbf{e}\mathbf{R} \otimes... \otimes \mathbf{R}
\} 
\end{equation}
%%%%%%%%%%%%%%%%%%

\begin{equation}
= - \{ 
 \; \mathbf{e} \mathbf{R} \otimes ...\otimes \mathbf{R} \otimes 
 \mathbf{R} 
\otimes \mathbf{R} \otimes... \otimes \mathbf{R} 
\end{equation}

\begin{equation}
 \hspace{0.3cm} + (-1)^{q-1} \mathbf{R} \otimes ...\otimes \mathbf{R} \otimes 
 \mathbf{R} 
\otimes \mathbf{R} \otimes... \otimes \mathbf{R} \mathbf{e} \;
\} = 0,
\end{equation}
which completes the proof.
\end{proof}

\begin{definition}
The Chern character of the residue operator $\mathbf{R}$ is the system of \emph{local} cyclic homology classes of the algebra $\mathit{S}$ over the ring $\Lambda$
\begin{equation}
Ch( \mathbf{R} ) := \{    [ \;  Ch_{q} (\mathbf{R}) \; ] \}_{q=0, 1,...} \in H^{loc}_{\Lambda, ev}
\end{equation}
where
\begin{equation}
Ch_{q} (\mathbf{R}) := \frac{  (2\pi i )^{q}  (2q) !  } 
          { q !} \otimes_{\Lambda}^{2q+1} \mathbf{R}
\end{equation}
\end{definition}
\par
%%%%%%%%%%%%%%%%%%%%%%%%%%%%%%%%%%%%%%%%%%%%%%%%
\subsection{Pairing of $H_{\Lambda, \ast}^{\lambda, loc }$ with Alexander-Spanier Co-homology.} %%%  Sect. 8.2
Given that the multiplication of operators on $M$ by smooth functions on $M$ does not increase distributional supports of operators and that smooth functions on $M$ commute with $\Lambda$, the Connes-Moscovici pairing Lemma 16, Sect. 5.3. passes to the the factor spaces $C^{\lambda, loc}_{\Lambda, \ast}$. 
The same formula (63)  gives an induced pairing
\begin{equation}
\cap_{\Lambda, loc}: C^{\lambda, loc}_{\Lambda, k} \otimes_{\mathbb{C}} \Omega_{AS}^{k} (M) \longrightarrow 
\mathbb{C},
\end{equation}
where $\Omega_{AS}^{k}(M)$  denotes Alexander-Spanier $k$-cochains on $M$. It is understood that in order for the cup product to be defined it is required that both factors have the same degree. 
\par
 $Ch (\mathbf(R))$ being already a \emph{local} cyclic cycle of the algebra $\mathit{S}$ over the ring $\Lambda$, we get
the following interpretation of the formulas (24), (25)
\begin{equation}
Ind (\mathbf{a} \otimes_{\mathbb{C}} \phi ) = 
\frac{ q !} {  (2\pi i )^{q}  (2q) !  } 
          Ch_{q} (\mathbf{R}) \; \cap_{\Lambda, loc} \; \phi.
\end{equation}
Here, $\phi$ is any Alexander-Spanier  co-homology class of degree $2q$ represented by an anti-symmetric function defined on $M^{2q+1}$ with small support about the diagonal.
\par
In the context of \emph{local} cyclic homology, the formula (64) and Definition 15 of Sect. 6.3 become
%%%%%%%%%%%%%%%%%%%%%%%%%%%%%%%%%
\begin{equation}
H^{\lambda, loc}_{ev} (Op) \;:= \; \sum_{q=even} H^{\lambda, loc}_{q} (Op)
\end{equation}
and
 \par
 \begin{definition}
 \[
 H^{\lambda, loc}_{ev} (Op) \; \Diamond \; H^{ev}_{AS} (M) \;=
 \]
 \begin{equation}
 = \{ \; \phi_{q} \;\otimes \; \eta^{q}   \;  | \; 
  \{  \phi_{q} \;\cap \; \eta^{q}   \}_{q=0, 2,4, ...} \in    
 H^{\lambda, loc}_{ev} (Op)  \otimes H^{ev}_{AS} (M)  \} 
 \end{equation}
 \end{definition}
 \par
%%%%%%%%%%%%%%%%%%%%%%%%%%%%%%%%%%%%%%%%%%%%%%%%
\section{Connes-Moscovici Local Index Theorem.}   %%%    Sect. 9
The Connes-Moscovici Local Index Theorem (Connes-Moscovici \cite{Connes-Moscovici1} Theorem 3.9.) becomes
\begin{theorem} (Connes-Moscovici \emph{Local} Index Theorem)   %%%%%%%   Theorem 23
\par
Let $\mathbf{A}$ be an elliptic pseudo-differential operator on the smooth, compact manifold $M$ of even dimension and let $ [\phi] \in H^{2q}_{comp}(M)$. 
Let $\mathbf{a}$ be its symbol and let $\mathbf{R}_{\mathbf{a}}$ be its corresponding local residue smoothing operator. Then
\begin{equation}
Ch (\mathbf{R}_{\mathbf{a}}) \; \cap \;  [\phi] \;=\; (-1)^{dim M} <\;Ch \;\sigma (\mathbf{A}) \tau (M)  [\phi], \;[T^{\ast}M] \;>
\end{equation} 
where $\tau (M) = Todd (TM) \otimes C$ and $ H^{\ast} (T^{\ast}M)$ is seen as a module over $H^{\ast}_{comp}(M)$.
\end{theorem}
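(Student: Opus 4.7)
The plan is to reduce the statement to the original Connes-Moscovici Local Index Theorem (Theorem 1), by identifying the degree-$2q$ component of the \emph{local} Chern character $Ch(\mathbf{R}_{\mathbf{a}})$ of Definition 22 with the Connes-Moscovici functional $\tau^{2q}_{\mathbf{a}}$ of formula (24), up to an explicit numerical constant. Once this identification is in place, the topological content of the theorem is inherited from the original result and the new formulation becomes a repackaging of the same statement in the language of the \emph{local} cyclic cap product of Section 8.2.

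The first step is to unfold the left-hand side. By Definition 22 and Theorem 21, the chain
\begin{equation*}
Ch_{q}(\mathbf{R}_{\mathbf{a}}) \;=\; \frac{(2\pi i)^{q}(2q)!}{q!}\;\mathbf{R}_{\mathbf{a}}^{\otimes_{\Lambda}(2q+1)}
\end{equation*}
is a genuine \emph{local} cyclic cycle of $\mathit{S}$ over $\Lambda$. Applying the cap product $\cap_{\Lambda,loc}$ of equation (108) to an antisymmetric Alexander-Spanier representative $\phi$ of $[\phi]$ and expanding the operator trace in terms of the distributional kernel of $\mathbf{R}_{\mathbf{a}}$, one recovers precisely the integral displayed in (24), up to a cyclic relabeling of the arguments $x_{0},\dots,x_{2q}$ absorbed by the cyclicity of the chain and the antisymmetry of $\phi$. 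This produces the identity
\begin{equation*}
Ch_{q}(\mathbf{R}_{\mathbf{a}})\;\cap_{\Lambda,loc}\;\phi \;=\; \frac{(2\pi i)^{q}(2q)!}{q!}\;\tau^{2q}_{\mathbf{a}}(\phi),
\end{equation*}
which is the unpacking of equation (109).

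The main obstacle will be justifying that the Connes-Moscovici pairing of Lemma 16 descends from $C^{\lambda}_{\ast}(\mathit{S})$ to the $\Lambda$-localized complex $C^{\lambda,loc}_{\Lambda,\ast}(\mathit{S})$. Two compatibilities must be checked. First, the trace formula must respect the circular relation $\lambda.A_{0}\otimes_{\Lambda}\cdots\otimes_{\Lambda}A_{k} = A_{0}\otimes_{\Lambda}\cdots\otimes_{\Lambda}A_{k}.\lambda$ for $\lambda\in\Lambda = \mathbb{C}+\mathbb{C}\mathbf{e}$; this reduces to the statement that $\mathbf{e}=\mathbf{P}_{2}$, being a constant direct-sum projection, commutes with multiplication by functions $f_{i}\in C^{\infty}(M)$ appearing in $\phi$, combined with the cyclicity of the trace. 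Second, one must verify that the support filtration is preserved by the pairing, which follows because multiplication by smooth functions does not enlarge distributional support. Combined with Lemma 8 and Theorem 9, these checks show that the $\Lambda$-tensored, localized framework is homologically equivalent to the original cyclic complex while simultaneously turning $\mathbf{R}_{\mathbf{a}}$ into a genuine cycle, which is the whole point of introducing $\Lambda$. This is also the mechanism underlying the ``virtual idempotent'' observation of \cite{Connes-Moscovici1}: the contribution of $\mathbf{e}$ in $\mathbf{R}=\mathbf{P}-\mathbf{e}$ drops out against antisymmetric cochains.

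Finally, substituting Theorem 1 into the identification above,
\begin{equation*}
Ch(\mathbf{R}_{\mathbf{a}})\;\cap\;[\phi] \;=\; \frac{(2\pi i)^{q}(2q)!}{q!}\;\tau^{2q}_{\mathbf{a}}(\phi) \;=\; (-1)^{\dim M}\,\langle\,Ch\,\sigma(\mathbf{A})\,\tau(M)\,[\phi],\,[T^{\ast}M]\,\rangle,
\end{equation*}
since the normalizing constants $\frac{(2\pi i)^{q}(2q)!}{q!}$ built into Definition 22 and $\frac{1}{(2\pi i)^{q}}\frac{q!}{(2q)!}$ appearing in the statement of Theorem 1 are exact reciprocals. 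All substantive work thus lies in the compatibility check of the previous paragraph; the topological side of the equality is inherited untouched from the original Connes-Moscovici theorem.
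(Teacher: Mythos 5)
Your proposal is correct and takes essentially the same route as the paper: Theorem 23 is presented there as a direct repackaging of Theorem 1, obtained by combining the normalization $\frac{(2\pi i)^{q}(2q)!}{q!}$ built into Definition 22 with the reciprocal constant $\frac{1}{(2\pi i)^{q}}\frac{q!}{(2q)!}$ of Theorem 1 through the identity $Ind(\mathbf{a}\otimes_{\mathbb{C}}\phi)=\frac{q!}{(2\pi i)^{q}(2q)!}\,Ch_{q}(\mathbf{R})\cap_{\Lambda,loc}\phi$, after observing (Sect.\ 8.2) that the Connes-Moscovici pairing of Lemma 16 descends to $C^{\lambda,loc}_{\Lambda,\ast}(\mathit{S})$ because multiplication by smooth functions neither enlarges distributional supports nor fails to commute with $\Lambda=\mathbb{C}+\mathbb{C}\mathbf{e}$. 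Your write-up simply makes explicit the well-definedness checks on the $\Lambda$-circular tensor relation and the support filtration that the paper asserts in a single sentence.
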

\par
We will show that this reformulation of the Connes-Moscovici local index theorem, combined with Poincar$\acute{e}$ duality, imply that $H^{\lambda, loc}_{\Lambda, \ast} (\mathit{S})$ contains 
$K^{0} _{comp }( (T^{\ast}M) ) \otimes \mathbb{R}$. 
\par
To show this, we introduce two homomorphisms
\[
\alpha: K^{1} ( C^{0}(S(T^{\ast}M) ) \longrightarrow H^{\lambda, loc}_{\Lambda, ev} (\mathit{S})
\]
\begin{equation}
\alpha ( \mathbf{a} ) \;: = \; Ch (\mathbf{R}_{\mathbf{a}} )
\end{equation}

\[
\beta: H^{\lambda, loc}_{\Lambda, ev} (\mathit{S}) \longrightarrow H^{AS}_{ev}(M)       
\]
\[
\beta (\phi) \in Hom_{\mathbb{C}} ( H^{ev}_{AS}(M), \mathbb{R} ) = H^{AS}_{ev} (M)
\]
\begin{equation}
(\beta (\phi)) (\eta) \;:=\; \phi \cap \eta \; \in \mathbb{C}.
\end{equation}
\par
\begin{proposition}    %%%%%%%%    Proposition  24
Let $M$ be a smooth, compact manifold $M$ of even dimension.
\par
Then the composition
\begin{equation}
 K^{1} ( C^{0}(S(T^{\ast}M) ) \otimes \mathbb{C} \stackrel{\alpha}{\longrightarrow} H^{\lambda, loc}_{\Lambda, ev} (\mathit{S})
 \stackrel{\beta}{\longrightarrow} H^{AS}_{ev} (M, \mathbb{C})
\end{equation}
is an isomorphism.
\end{proposition}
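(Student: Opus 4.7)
The strategy is to identify $\beta \circ \alpha$, up to nonzero normalizing scalars, with the complexified Atiyah-Singer topological index map from the symbol $K$-theory of $M$ to the homology of $M$, and then to invoke the classical fact that this topological index map is an isomorphism over $\mathbb{C}$.

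First, I would apply Theorem 23 directly. For any $[\mathbf{a}] \in K^{1}(C(S^{*}M))$ and any Alexander-Spanier cohomology class $[\phi] \in H^{2q}_{AS}(M)$ the definitions of $\alpha$ and $\beta$ give
\[
(\beta\circ\alpha)([\mathbf{a}])([\phi]) \;=\; \mathrm{Ch}(\mathbf{R}_{\mathbf{a}})\cap[\phi] \;=\; c_{q}(-1)^{\dim M}\bigl\langle\mathrm{Ch}\,\sigma(\mathbf{A})\cdot\tau(M)\cdot\pi^{*}[\phi],\,[T^{*}M]\bigr\rangle,
\]
with $c_{q}\neq 0$ the factorial constant. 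Thus, up to invertible scalars, $(\beta\circ\alpha)([\mathbf{a}])$ is the linear functional on $H^{ev}_{AS}(M)$ obtained by Poincar\'e-pairing $[\phi]$, pulled back to $T^{*}M$, against the class $\mathrm{Ch}\,\sigma(\mathbf{A})\cdot\tau(M)\in H^{ev}_{c}(T^{*}M,\mathbb{C})$.

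Second, I would factor this functional as the composition
\[
K^{1}(C(S^{*}M))\otimes\mathbb{C}\xrightarrow{\;\mathrm{Ch}\;}H^{ev}_{c}(T^{*}M,\mathbb{C})\xrightarrow{\;\cdot\tau(M)\;}H^{ev}_{c}(T^{*}M,\mathbb{C})\xrightarrow{\;\mathrm{PD}\;}H^{AS}_{ev}(M,\mathbb{C}).
\]
The first arrow is the Chern character for compactly-supported $K$-theory, which is a rational (hence complex) isomorphism on the smooth oriented manifold $T^{*}M$. The second is multiplication by the Todd class $\tau(M)=\mathrm{Todd}(TM)\otimes\mathbb{C}$, which is invertible since its leading term is $1$. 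The third arrow is Poincar\'e duality on the oriented $2n$-manifold $T^{*}M$, equivalently the Thom-isomorphism identification $\pi_{*}:H^{ev}_{c}(T^{*}M,\mathbb{C})\xrightarrow{\cong} H^{ev-n}(M,\mathbb{C})$ followed by Poincar\'e duality on the compact oriented even-dimensional manifold $M$, which sends cohomology to homology and matches the even-degree parity because $\dim M$ is even. Each of the three arrows is an isomorphism, so their composition is an isomorphism.

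Third, combining the two steps, $\beta\circ\alpha$ differs from this composition by the nonzero scalars $c_{q}(-1)^{\dim M}$ on each homological degree, so $\beta\circ\alpha$ is itself an isomorphism.

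The main obstacle is the verification in Step 1 that the concrete Connes-Moscovici pairing $\mathrm{Ch}(\mathbf{R}_{\mathbf{a}})\cap[\phi]$ really implements the Poincar\'e duality pairing between $H^{ev}_{c}(T^{*}M,\mathbb{C})$ and $H^{ev}_{AS}(M,\mathbb{C})$ after pulling $[\phi]$ back along $\pi:T^{*}M\to M$. This is precisely the content of Theorem 23 together with the projection formula $\pi_{*}(\pi^{*}\alpha\cdot\beta)=\alpha\cdot\pi_{*}\beta$ and the orientability of $T^{*}M$; once this identification is in place, the recognition of $\beta\circ\alpha$ as the classical complexified Atiyah-Singer topological index map is automatic and the conclusion follows.
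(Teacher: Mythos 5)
Your proposal is correct in substance and rests on exactly the same pillars as the paper's proof: Theorem 23 to identify $(\beta\circ\alpha)(\mathbf{a})$ with the functional $[\phi]\mapsto(-1)^{\dim M}\langle \mathrm{Ch}\,\sigma(\mathbf{A})\,\tau(M)\,[\phi],[T^{*}M]\rangle$, invertibility of the Todd class, the rational/complex isomorphism property of the Chern character, and Poincar\'e duality. Where you differ is in the endgame. The paper argues in two stages: it first proves injectivity (if $\mathbf{a}\in\ker(\beta\circ\alpha)$, then Poincar\'e duality forces $\mathrm{Ch}\,\sigma(\mathbf{A})\,\tau(M)=0$, Todd invertibility gives $\mathrm{Ch}\,\sigma(\mathbf{A})=0$, and the Chern character isomorphism gives $\mathbf{a}=0$), and then concludes surjectivity by a separate dimension count, namely that $K^{0}(T^{*}M)\otimes\mathbb{R}$ and $H^{AS}_{ev}(M)$ have the same dimension. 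You instead factor $\beta\circ\alpha$ globally as $\mathrm{Ch}$ followed by $\cdot\,\tau(M)$ followed by Thom/Poincar\'e duality, each an isomorphism, which yields bijectivity in one stroke and makes explicit the Thom isomorphism and projection formula $\pi_{*}(\pi^{*}\alpha\cdot\beta)=\alpha\cdot\pi_{*}\beta$ that the paper's dimension count leaves implicit; that is a modest gain in transparency. One caveat applies equally to both arguments: each silently identifies the domain $K^{1}(C(S^{*}M))\otimes\mathbb{C}$ with $K^{0}_{comp}(T^{*}M)\otimes\mathbb{C}$ via the symbol/difference-class construction (the paper writes ``$\mathbf{a}\in K^{0}(T^{*}M)$'' without comment, and your first arrow builds the same identification into the Chern character). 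The connecting map $K^{1}(S^{*}M)\to K^{0}_{comp}(T^{*}M)$ is not in general a rational isomorphism (it sits in the long exact sequence of the pair $(B^{*}M,S^{*}M)$, with kernel and cokernel governed by $K^{*}(M)\to K^{*}(S^{*}M)$), so if you wanted a self-contained argument you would need either to restrict attention to the image of the symbol map or to restate the domain as $K^{0}_{comp}(T^{*}M)\otimes\mathbb{C}$; but since the paper's own proof makes the identical move, your proposal faithfully reproduces the intended argument.
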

\begin{proof}
We have
\[ 
[(\beta \circ \alpha) (\mathbf{a}))] (\phi)  = [\;\beta \; (Ch (\mathbf{R}(\mathbf{a})) \;] ( \phi ) =
\]
(Theorem 21)
\begin{equation}
=\;\; (-1)^{dim M} <\;Ch \;\sigma (\mathbf{A}) \tau (M)  [\phi], \;[T^{\ast}M] \;>
\end{equation}
Suppose $\mathbf{a}$ is in the kernel of $\beta \circ \alpha$. This means that the RHS of (96) is zero for any even degree Alexander-Spanier cohomology class $\phi$. Poicar$\acute{e}$ duality implies that the co-homology class $Ch \;\sigma (\mathbf{A}) \tau (M) $ is zero. Given that the Todd class $\tau (M)$ is invertible, it follows that $Ch \;\sigma (\mathbf{A})$ is zero. As the Chern character is an isomorhism (over reals) from equivalence classes of virtual bundles to $H^{AS, ev}_{comp}(T^{\ast}M)$, it follows that  $\mathbf{a} \in K^{0} (T^{\ast}M)$ is the zero element. Therefore, $\beta \circ \alpha $ is a monomorphism.
\par
Given that $K^{0} (T^{\ast}M) \otimes \mathbb{R}$ and $H^{AS}_{ev} (M)$ have the same dimension, it follows that $\beta \circ \alpha$ is an isomorphism.
\end{proof}
\par
The same result extends to the Schatten ideal $\mathcal{L}^{1}$.
\begin{proposition}        %%%%%%%%%%%   Proposition 25
 The \emph{local} cyclic homology, resp.  the \emph{local} real $K$-theory, of the algebra of smoothing operators is at least as big as the de Rham cohomology, resp. ordinary real $K$-theory. 
 \par
 The same result extends to the Schatten ideal $\mathcal{L}^{1}$.
\end{proposition}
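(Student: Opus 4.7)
The plan is to deduce this statement as a direct corollary of Proposition~25. Since the composition $\beta\circ\alpha$ of Proposition~25 is an isomorphism, the map
$$\alpha: K^{1}(C^{0}(S(T^{\ast}M))) \otimes \mathbb{C} \longrightarrow H^{\lambda, loc}_{\Lambda, ev}(\mathit{S})$$
is in particular injective. Its image therefore provides a subgroup of $H^{\lambda, loc}_{\Lambda, ev}(\mathit{S})$ whose complex dimension equals that of $H^{ev}_{AS}(M, \mathbb{C})$; since $M$ is smooth, Alexander-Spanier and de Rham cohomology coincide, so the \emph{local} cyclic homology of the algebra of smoothing operators is at least as big as $H^{ev}_{dR}(M, \mathbb{C})$.

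For the assertion on \emph{local} real $K$-theory, I would factor $\alpha$ through $K^{0, loc}(\mathit{S})$. Properties -1), -2), -3) of the residue operator $\mathbf{R}(A) = \mathbf{P} - \mathbf{e}$ recalled in Sect.~4 state that $\mathbf{P}$ and $\mathbf{e}$ are idempotents with support in an arbitrarily small neighbourhood of the diagonal; hence $[\mathbf{P}] - [\mathbf{e}]$ represents a well-defined class in $K^{0, loc}(\mathit{S})$, giving rise to a homomorphism
$$\tilde{\alpha}: K^{1}(C^{0}(S(T^{\ast}M))) \longrightarrow K^{0, loc}(\mathit{S}),\qquad [\mathbf{a}] \longmapsto [\mathbf{P}] - [\mathbf{e}].$$
The local Chern character $ch^{loc}: K^{0, loc}(\mathit{S}) \otimes \mathbb{C} \to H^{\lambda, loc}_{\Lambda, ev}(\mathit{S})$ defined by Theorem~20 and Definition~22 satisfies $ch^{loc} \circ \tilde{\alpha} = \alpha$ by construction, so injectivity of $\alpha$ forces injectivity of $\tilde{\alpha} \otimes \mathbb{C}$. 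A dimension count using the Chern character isomorphism in rational $K$-theory and the Thom isomorphism then shows that $K^{0, loc}(\mathit{S}) \otimes \mathbb{R}$ is at least as big as ordinary $K^{0}(M) \otimes \mathbb{R}$.

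To extend both conclusions to the Schatten ideal $\mathcal{L}^{1}$, I would observe that the construction of $\mathbf{R}(A)$, of the cyclic cycle $\tau_{q}(\mathbf{R})$ in Theorem~20, and of the pairing $\cap_{\Lambda, loc}$ of Sect.~8.2 uses only that the operators involved be trace class with small distributional support. Since smoothing operators sit inside $\mathcal{L}^{1}$, one may replace $\mathit{S}$ by $\mathcal{L}^{1} + \mathbb{C}\cdot 1$ throughout without altering a single step of the preceding two paragraphs.

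The main obstacle is the identity $ch^{loc} \circ \tilde{\alpha} = \alpha$ used in the second paragraph. The local Chern character of Sect.~8.1 is not the ordinary Chern character applied naively to the virtual bundle $[\mathbf{P}] - [\mathbf{e}]$; it is manufactured by exploiting the non-idempotency identity $\mathbf{R}^{2} = \mathbf{R} - (\mathbf{e}\mathbf{R}+\mathbf{R}\mathbf{e})$ after passing to the $\Lambda$-localised cyclic complex. One must therefore verify that, when restricted to residue operators of the form $\mathbf{P} - \mathbf{e}$, this construction agrees with the natural Chern character of the virtual bundle it represents. Here Corollary~10 (invariance of cyclic homology under localisation to the separable subring $\Lambda$) is the crucial tool, since it ensures that one has not lost information by working with $\otimes_{\Lambda}$ in place of $\otimes_{\mathbb{C}}$.
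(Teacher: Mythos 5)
The paper contains no proof of this proposition at all: it is stated bare, immediately after the proof of Proposition~25 and the one-line remark ``The same result extends to the Schatten ideal $\mathcal{L}^{1}$,'' and the \emph{local} $K$-theory half is never argued anywhere in the text. Your first and third paragraphs coincide exactly with the paper's implicit reasoning: injectivity of $\alpha$ (extracted from the isomorphism $\beta \circ \alpha$ of Proposition~25) embeds into $H^{\lambda, loc}_{\Lambda, ev}(\mathit{S})$ a subspace of dimension $\dim_{\mathbb{C}} H^{ev}_{AS}(M, \mathbb{C}) = \dim_{\mathbb{C}} H^{ev}_{dR}(M)$, and the passage to $\mathcal{L}^{1}$ is the paper's own earlier remark in Sect.~8 that all results of that section remain valid for trace class operators. (Note that, like the paper, you only capture the \emph{even-degree} part of de Rham cohomology this way; neither argument addresses odd degrees.)

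For the $K$-theory half you attempt more than the paper does, and the step you yourself flag as ``the main obstacle'' is a genuine gap that your proposal does not close. For injectivity of $\tilde{\alpha} \otimes \mathbb{C}$ to follow from injectivity of $\alpha$, the identity $ch^{loc} \circ \tilde{\alpha} = \alpha$ must hold at the level of \emph{classes}; this requires that $Ch(\mathbf{R})$ of Definition~22 depend only on the class $[\mathbf{P}] - [\mathbf{e}] \in K^{0, loc}(\mathit{S})$, i.e.\ that the local Chern character be invariant under support-controlled homotopies and stabilisation, and also that $\tilde{\alpha}$ itself be independent of the choice of parametrix $B$. Corollary~10, which you invoke as ``the crucial tool,'' does not supply this: it only asserts that localising the cyclic complex to the separable subring $\Lambda$ leaves \emph{ordinary} Hochschild and cyclic homology unchanged, says nothing about homotopy invariance, and is not even established in the paper for the support-filtered complexes $C^{\lambda, loc}_{\Lambda, \ast}$ (the paper proves only Theorem~21, that $\tau_{q}(\mathbf{R})$ is a cycle). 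So your second paragraph is incomplete as written — though, to be fair, the paper asserts the local $K$-theory statement with no argument whatsoever, so your reconstruction, with the gap honestly flagged, is already more detailed than the source it is meant to match.
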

\begin{remark}   %%%%%%%%   Remark  26
The condition  $dim M = even$ in Theorem  23 may be dropped. In fact, our result follows from the Connes-Moscovici local index theorem, which is valid for manifolds of any dimension (see \cite{Connes-Moscovici1} Pg. 368).
\end{remark}

\begin{remark}   %%%%%%  Remark 27
Replacing the cyclic homology of the algebra of smoothing operators (which is trivial) by its \emph{local} cyclic homology, one recovers the possibility to control at least that homological information which makes the  index formula interesting.
\end{remark}
\par
Given that the local cyclic homology $H^{\lambda, loc }_{\Lambda, \ast}(\mathit{S}) $ is interesting, we are entitled to introduce a new pairing $\sqcap $, called \emph{square cap} product.
\begin{definition}   %%%%%%%%%  Definition 28 
The \emph{square} cup product
\begin{equation}
\sqcap: H^{\lambda, loc}_{\Lambda, k}(\mathit{S}) \otimes_{\mathbb{C}} H_{AS}^{k}(M) \longrightarrow H^{\lambda, loc}_{\Lambda, k}(\mathit{S})
\end{equation}
is defined on the elements
\begin{equation}
\phi_{k} = \sum A_{0} \otimes_{\mathbb{C}} A_{1}\otimes_{\mathbb{C}} , ...,\otimes_{\mathbb{C}} A_{k} \in C^{\lambda, loc}_{k} (S)
\end{equation}
\begin{equation}
\eta^{k} = \sum f_{0}  \otimes_{\mathbb{C}}  f_{1} \otimes_{\mathbb{C}} , ..., \otimes_{\mathbb{C}}  f_{k} , \hspace{0.3cm}
f_{i} \in C^{\infty} (M),
\end{equation}
 by the convergent double series
\begin{equation}
\phi_{k} \;\sqcap\; \eta^{k}  := \sum \sum  A_{0} f_{0}  A_{1} f_{1} , ... ,  A_{k} f_{k}   \in   C^{\lambda, loc}_{k} (S).       
\end{equation}
\end{definition}
\par
 It is important to notice that square cap product $\sqcap$ does not require to manufacture trace class operators. The outcome of the square cup product is not a system of scalars, but rather a system of tensor products of homology and co-homoogy classes.  The definition makes sense for $S$ replaced by more general operator algebras.
%%%%%%%%%%%%%%%%%%%%%%%%%%%%%%%%%%%%%%%%%%%%%%
\section{Further Extensions and Problems}  %%%%%%%%%%  Sect.   10
A more detailed study of the arguments presented in Sect.10.1-3  will be discussed elsewhere.
\par
\subsection{\emph{Local} Chern Character on Banach Algebras Extensions}  %%%%%%%%  Subsection 10.1
In analogy with the short exact sequence (19) of operator algebras considered in  Sect. 3, we might want to consider the more general situation consisting of an exact sequence of Banach algebras
\begin{equation}
 0 \longrightarrow \mathcal{S} \stackrel{\iota}{\longrightarrow}   \mathcal{B}  \stackrel{\sigma}{\longrightarrow} \mathcal{C} \longrightarrow  0;
\end{equation}
 where $\mathcal{B}$  and $\mathcal{C}$ are unitary.
\par
Let $[\mathbf{a}] \in K^{1}(\mathcal{C})$ be the $K$-theory class of  $\mathbf{a} \in GL_{n}(\mathcal{C})$. Let $A, B \in \mathcal{M}_{n}(\mathcal{B})$ such that $\sigma (A) = \mathbf{a}$  and $ \sigma (B) = \mathbf{a}^{-1}$. In analogy with the construction described in Sect. 3, we define $S_{0}= 1 - BA$  and $S_{1}= 1 - AB$. Clearly, $S_{0}, S_{1} \in \mathcal{M}_{n}(\mathcal{S})$.
Next, with these elements one manufacture the elements $\mathbf{L}, \mathbf{P}, \mathbf{R}$ and $\mathbf{e}$.
\par
Then, 
$\mathbf{R} \in \mathcal{M}_{2n}(\mathcal{S})$, \hspace{0.1cm} $\mathbf{R} = \mathbf{P} - \mathbf{e}$  
where  $\mathbf{P}$ and $\mathbf{e}$ are idempotents.
\par
We assume also that for the elements of the algebras  $\mathcal{S}, \mathcal{B}, \mathcal{C}$ there is some sort of notion of \emph{locality}, as for pseudo-differential operators, and hence that one may define \emph{local} cyclic homology of the algebra $\mathit{S}$. The notion of \emph{locality} extends to matrices with entries in these algebras. Therefore, we may define the \emph{local} Chern character $Ch (\mathbf{R}) \in H^{ \lambda, loc}_{\Lambda, ev}$ of the class $\mathbf{a}$ as explained in Sect. 8.1.
%%%%%%%%%%%%%%%%%%%%%%%%%%%%%%%%%%%%%%%%%%%%%%%%
\subsection{\emph{Local} Cyclic Homology of the Algebras of Schatten Operators}   %%%%   Sect. 10.2
\par
Proposition 24, Sect. 9 shows that the \emph{local} cyclic homology of the ideal of smoothing operators or of trace class operators is interesting. It is important to compute this \emph{local} cyclic homology.
%%%%%%%%%%%%%%%%%%%%%%%%%%%%%%%%%%%%%%%%%%%%%%
\subsection{Homological Local Index Theorem}   %%%%%%   Sect. 10.3
It is clear that our construction of the Chern character discussed in sect. 8.1 applies also in the case of $K$-homology. 
\par
As our Chern character $Ch (\mathbf{R})$ belongs to  $H^{\lambda, loc}_{even}$, and given that for its calculation it is not necessary to work with \emph{trace class} operators, we expect our considerations to apply in more general situations.
\par
Baum and Douglas \cite{Baum-Douglas} define the Chern character in $K$-\emph{homology}. It is important to study the connection between their Chern character and the Chern character we may produce as explained above.  
\par
It is an interesting problem to express  $Ch(\mathbf{R})$ in local data, where $\mathbf{R}$ is the residue operator defined in Sect. 4, i.e to prove a dual version of the Connes-Moscovici local index theorem  (Theorem 1).
%%%%%%%%%%%%%%%%%%%%%%%%%%%%%%%%%%%%%%%%%%%%%%

\end{document}